\newtheorem{thm}{Theorem}[section]
\newtheorem{cor}[thm]{Corollary}
\newtheorem{lem}[thm]{Lemma}
\newtheorem{prop}[thm]{Proposition}
\newtheorem{rem}[thm]{Remark}
\begin{document}

\title{Scaling limit for random walk\\on the range of random walk\\in four dimensions}
\author{D.~A.~Croydon and D.~Shiraishi}
\maketitle

\begin{abstract}
We establish scaling limits for the random walk whose state space is the range of a simple random walk on the four-dimensional integer lattice. These concern the asymptotic behaviour of the graph distance from the origin and the spatial location of the random walk in question. The limiting processes are the analogues of those for higher-dimensional versions of the model, but additional logarithmic terms in the scaling factors are needed to see these. The proof applies recently developed machinery relating the scaling of resistance metric spaces and stochastic processes, with key inputs being natural scaling statements for the random walk's invariant measure, the associated effective resistance metric, the graph distance, and the cut times for the underlying simple random walk.\\
\textbf{Keywords:} random walk, scaling limit, range of random walk, random environment.\\
\textbf{MSC:} 60K37 (primary), 05C81, 60K35, 82B41.
%60K37   	Processes in random environments
%05C81   	Random walks on graphs
%60K35      Interacting random processes; statistical mechanics type models; percolation theory
%82B41      Random walks, random surfaces, lattice animals, etc. in equilibrium statistical mechanics
\end{abstract}

\section{Introduction}

The object of interest in this article is the discrete-time simple random walk on the range of a four-dimensional random walk. To introduce this, we follow the presentation of \cite{Croy}, in which the same and higher-dimensional versions of the model were studied. Let $S=(S_n)_{n\geq 0}$ be the simple random walk on $\mathbb{Z}^4$ started from $0$, built on an underlying probability space $\Omega$ with probability measure $\mathbf{P}$. Define the range of the random walk $S$ to be the graph $\mathcal{G}:=(V(\mathcal{G}),E(\mathcal{G}))$ with vertex set
\begin{equation}\label{vdef}
V(\mathcal{G}):=\left\{S_n:\:n\geq 0\right\},
\end{equation}
and edge set
\begin{equation}\label{edef}
E(\mathcal{G}):=\left\{\{S_n,S_{n+1}\}:\:n\geq 0\right\}.
\end{equation}
For $\mathbf{P}$-a.e.\ random walk path, the graph $\mathcal{G}$ is infinite, connected and has bounded degree. Given a realisation of $\mathcal{G}$, we write
\[X=\left((X_n)_{n\geq 0},(\mathbf{P}_x^\mathcal{G})_{x\in V(\mathcal{G})}\right)\]
to denote the discrete-time Markov chain with transition probabilities
\[P_\mathcal{G}(x,y):=\frac{1}{\mathrm{deg}_\mathcal{G}(x)}\mathbf{1}_{\{\{x,y\}\in E(\mathcal{G})\}},\]
where $\mathrm{deg}_\mathcal{G}(x)$ is the usual graph degree of $x$ in $\mathcal{G}$. For $x\in V(\mathcal{G})$, the law $\mathbf{P}_x^\mathcal{G}$ is the quenched law of the simple random walk on $\mathcal{G}$ started from $x$. Since $0$ is always an element of $V(\mathcal{G})$, we can also define an averaged (or annealed) law $\mathbb{P}$ for the random walk on $\mathcal{G}$ started from 0 as the semi-direct product of the environment law $\mathbf{P}$ and the quenched law $\mathbf{P}_0^\mathcal{G}$ by setting
\[\mathbb{P}:=\int \mathbf{P}_0^\mathcal{G}(\cdot)\mathrm{d}\mathbf{P}.\]

Our main results are the following averaged scaling limits for the simple random walk $X$ on $\mathcal{G}$. The processes $(B_t)_{t\geq 0}$ and $(W_t)_{t\geq 0}$ are assumed to be independent standard Brownian motions on $\mathbb{R}$ and $\mathbb{R}^4$ respectively, both started from the origin of the relevant space. The notation $d_\mathcal{G}$ is used to represent the shortest path graph distance on $\mathcal{G}$. We note that it was previously shown in \cite[Theorem 1.3]{Croy} that $n^{-1/4}|X_n|$ is with high probability of an order in the interval $[(\log n)^{1/24},(\log n)^{7/12}]$, and part (b) refines this result substantially; a reparameterisation yields that the correct order of $n^{-1/4}|X_n|$ is $(\log n)^{1/8+o(1)}$. Similarly, a reparameterisation of part (a) yields that the correct order of $n^{-1/2}d_\mathcal{G}(0,X_n)$ is $(\log n)^{-1/4+o(1)}$.

\begin{thm}\label{main1} (a) There exist deterministic slowly-varying functions $\phi:\mathbb{N}\rightarrow \mathbb{R}$ and $\psi:\mathbb{N}\rightarrow \mathbb{R}$ satisfying
\[\phi(n)=\left(\log n\right)^{-\frac{1}{2}+o(1)},\qquad\psi(n)=\left(\log n\right)^{-\frac{1}{2}+o(1)},\]
such that the law of
\[\left(\frac{1}{n\phi(n)}d_\mathcal{G}\left(0,X_{\lfloor tn^2\psi(n)\rfloor}\right)\right)_{t\geq 0}\]
under $\mathbb{P}$ converges as $n\rightarrow\infty$ to the law of $(|B_t|)_{t\geq 0}$.\\
(b) For $\psi:\mathbb{N}\rightarrow \mathbb{R}$ as in part (a), the law of
\[\left(\frac{1}{n^{1/2}}X_{\lfloor tn^2\psi(n)\rfloor}\right)_{t\geq 0}\]
under $\mathbb{P}$ converges as $n\rightarrow\infty$ to the law of $(W_{|B_t|})_{t\geq 0}$.
\end{thm}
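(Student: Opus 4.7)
My approach would be to invoke the general scaling limit machinery for Markov processes on sequences of measured resistance metric spaces (as developed in \cite{Croy} and references therein). Under that framework, the theorem follows once one establishes (i) Gromov--Hausdorff-vague convergence of the rescaled, rooted, measured resistance networks
\[\Bigl(V(\mathcal{G}),\,(n\phi(n))^{-1}R_\mathcal{G},\,(n^2\psi(n))^{-1}\mu_\mathcal{G},\,0\Bigr)\]
to a limiting space, where $\mu_\mathcal{G}=\mathrm{deg}_\mathcal{G}$ is the invariant measure of $X$, together with (ii) convergence of the spatial embedding into $\mathbb{R}^4$ rescaled by $n^{-1/2}$. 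The natural candidate for the limit is the real line equipped with Lebesgue measure, origin $0$, and the embedding $t\mapsto W_t$; the corresponding limit process is then Brownian motion on $\mathbb{R}$, whose modulus is $|B_t|$ (yielding (a)) and whose image under the embedding is, by Brownian symmetry, equal in law to $W_{|B_t|}$ (yielding (b)).

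The bulk of the argument therefore reduces to four averaged scaling statements under $\mathbb{P}$. First, the number of cut times of $S$ up to time $m$ should be of order $m(\log m)^{-1/2+o(1)}$, the key structural input in four dimensions, which rests on Lawler's critical estimate $\mathbf{P}(S[0,m]\cap\tilde S[1,m]=\emptyset)\asymp(\log m)^{-1/2}$ for two independent simple random walks. Second, the effective resistance $R_\mathcal{G}(0,S_m)$ should satisfy the same $m(\log m)^{-1/2+o(1)}$ scaling; the lower bound follows from the series structure imposed by the cut points, and the upper bound from Rayleigh monotonicity applied to the loops between consecutive cut times, whose typical lengths are of order $(\log m)^{1/2}$. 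Third, the graph distance $d_\mathcal{G}(0,S_m)$ should satisfy the same scaling by analogous reasoning, which explains why $\phi$ and $\psi$ have the same leading logarithmic exponent. Fourth, the range size $|V(\mathcal{G})\cap S[0,m]|=m(\log m)^{-1+o(1)}$ (classical Erd\H{o}s--Taylor behaviour) calibrates the invariant measure against the walk's natural clock, producing the $n^2\psi(n)$ time scale.

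To upgrade these pointwise statements to the full Gromov--Hausdorff-vague convergence, I would combine them with a Donsker-type invariance principle for $S$ evaluated along the cut-time sequence, so that the rescaled arc-length parameter along the "spine" of $\mathcal{G}$ converges to the identity in time while the spatial locations converge to $W$; tightness in the vague topology is then handled by localising to balls of macroscopic spatial radius and exploiting continuity of $W$. The main obstacle I anticipate is sharpening the critical-dimensional intersection estimates underlying the first two inputs: second-moment methods typically yield the $(\log m)^{-1/2}$ scaling only up to unknown slowly varying corrections, so producing genuine slowly varying $\phi,\psi$ and controlling the fluctuations of $R_\mathcal{G}$ and $d_\mathcal{G}$ about their means, uniformly in $m$, requires a careful analysis of the loop-plus-cut-point decomposition of the range. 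I would expect to lean heavily on the resistance, distance, and cut-time bounds already developed in \cite{Croy} as the starting point, with the new work concentrated in turning those one-sided bounds into matching two-sided averaged scaling statements.
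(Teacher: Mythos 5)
Your high-level framework is the right one — the paper does indeed reduce Theorem \ref{main1} to a resistance-form convergence statement (applying \cite[Theorem 7.2]{Croyres}) fed by scaling inputs for cut times, effective resistance, graph distance, and the invariant measure — and you correctly identify the cut-time structure and the Lawler-type $(\log m)^{-1/2}$ non-intersection estimate as the engine behind the logarithmic corrections. However, there is a genuine and consequential error in your calibration of the invariant measure, and it propagates into the rescaled space you propose to feed into the machinery.

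You scale $\mu_\mathcal{G}$ by $(n^2\psi(n))^{-1}$ and justify this via the claim that the range size is $m(\log m)^{-1+o(1)}$ ``by classical Erd\H{o}s--Taylor behaviour.'' That is the two-dimensional asymptotic; in $d=4$ the walk is transient and the range (and hence $\mu_\mathcal{G}(S_{[0,m]})$) grows \emph{linearly} in $m$, which is precisely the content of Theorem \ref{main2}(a) ($\mu_\mathcal{G}(S_{[0,nt]})/n\to\lambda t$ with no logarithmic factor). The correct rescaled space is therefore $\bigl(V(\mathcal{G}),\,(n\tilde\psi(n))^{-1}R_\mathcal{G},\,(\lambda n)^{-1}\mu_\mathcal{G},\,0\bigr)$ with a \emph{linear} volume scaling. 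The time scale $n^2\psi(n)$ appearing in the theorem is not the measure scaling; it is the \emph{product} of the resistance scale $n\tilde\psi(n)$ and the measure scale $\lambda n$ (with $\psi=\lambda\tilde\psi$), which is exactly how the resistance-form framework converts space and measure scales into a time scale. With your proposed scalings ($R_\mathcal{G}\sim n\phi(n)$, $\mu_\mathcal{G}\sim n^2\psi(n)$) the induced time scale would be $n^3\phi(n)\psi(n)$, contradicting the $n^2\psi(n)$ in the statement — so the proposal as written would not reproduce the theorem even if the convergence held. You also conflate the resistance scaling $\tilde\psi$ with the graph-distance scaling $\phi$; the paper keeps these as distinct slowly varying functions (both $(\log n)^{-1/2+o(1)}$, conjecturally equal up to constants), with $d_\mathcal{G}(0,\cdot)/(n\phi(n))$ entering only as the continuous map component of the enriched space, not as the metric defining the resistance form.

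Two smaller gaps worth flagging: the limit space should be the half-line $[0,\infty)$ rather than $\mathbb{R}$ with a modulus taken afterward, since the cut-point spine of $\mathcal{G}$ is genuinely one-sided from the origin, and the relevant limit process is reflected Brownian motion on $[0,\infty)$ (equal in law to $|B|$). And the passage from pointwise resistance/distance asymptotics to the functional statements needed for Gromov--Hausdorff-vague convergence is not automatic, because $R_\mathcal{G}(0,S_m)$ and $d_\mathcal{G}(0,S_m)$ are not monotone in $m$; the paper controls this via the maximal gap between consecutive cut times (Lemmas \ref{tgap} and \ref{request}). Your sketch does not address that step, and a naive tightness argument via localisation would not by itself supply it.
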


It was shown in \cite{Croy} that the corresponding result is true in dimensions $d\geq 5$, but without the need for the $\phi$ and $\psi$ terms in the scaling factors. (Actually, \cite{Croy} established the convergence of part (a) in a stronger way, namely under the quenched law $\mathbf{P}_0^\mathcal{G}$, for $\mathbf{P}$-a.e.\ realisation of $\mathcal{G}$.) As will become clear in our argument, the appearance of $\phi$ and $\psi$ in the four-dimensional case stem from their appearance in the scaling of the graph distance and effective resistance between $0$ and $S_n$ (see Theorem \ref{main2} below). In the lower-dimensional cases, the situation is substantially different. Indeed, for $d=1,2$, the underlying random walk is recurrent, and so $\mathcal{G}$ is simply $\mathbb{Z}^d$ equipped with all nearest-neighbour edges. Thus, the random walk $X$ simply scales diffusively to Brownian motion on $\mathbb{R}^d$. As for the case $d=3$, we do not have a conjecture for the scaling limit of the process. However, the random walk $S$ and its scaling limit $W$ are both transient, and therefore neither the graph $\mathcal{G}$, nor its scaling limit, will be space filling. On the other hand, we know that the graph $\mathcal{G}$ no longer has asymptotically linear volume growth (see \cite[Proposition 4.3.1]{Shk}), has spectral dimension strictly greater than one (see \cite[Theorem 1.2.3]{Shk}), and has loops that persist in the limit (see \cite{DEK} for the original proof that Brownian motion has double points, and \cite{LT,SS} for a description of how the random walk `loop soup' converges to that of Brownian motion). Therefore, in three dimensions, we do not expect scaling limits for $X$ as above.

As was noted in \cite{Croy}, one motivation for studying random walk on the range of a random walk is its application to the transport properties of sedimentary rocks of low porosity, where the commonly considered sub-critical percolation model does not reflect the pore connectivity properties seen in experiments, see \cite{Ban}. (We note that, in \cite{Ban}, it was observed that the effective resistance in $\mathcal{G}$ between 0 and $S_n$ is of order $n(\log n)^{-1/2}$, as was later rigourously proved in \cite{S}.) Further motivation given in \cite{Croy} was that the model provides a simple prototypical example of a graph with loops that nonetheless has a tree-like scaling limit, for which one could establish convergence of the associated random walks to the natural diffusion on the limiting tree-like space. Such a situation is also expected to be the case for more challenging models, such as critical percolation in high dimensions (see \cite{BCF1,BCF2,Chaus} for progress in this direction). In addition to works specifically related to high-dimensional percolation, recent years have seen the development of a general approach for deriving scaling limits of random walks on graphs with a suitably `low-dimensional' asymptotic structure, which demonstrates that if the effective resistance and invariant measure of the random walk converge in an appropriate fashion, then so does the random walk itself (see \cite{Croyres,CHK}, and \cite{ALW} for the particular case of tree-like spaces). We will appeal to this framework to prove Theorem \ref{main1}, with relevant details being introduced in Section \ref{sec3}. We highlight that the present work is a first application of such `resistance form' theory to a model at its critical dimension, where logarithmic terms appear in the scaling factors. (NB.\ Although the viewpoint of \cite{Croy} was a little different, combining the basic estimates of \cite{Croy} with the argument of \cite{Croyres} would also yield the scaling limits of random walk on the range of random walk in dimensions greater than or equal to 5 as given in \cite{Croy}.)

Providing the key inputs into the resistance form argument that we apply to deduce Theorem \ref{main1}, the first part of the paper is devoted to establishing a number of basic properties about the underlying graph $\mathcal{G}$. These give appropriate notions of volume, resistance, graph distance, and cut-time scaling for $S$. We highlight that much of the work needed to establish the following result was completed in earlier articles, see Remark \ref{back}(a) for details. To state the result, we need to define a number of quantities. Firstly, let $\mu_\mathcal{G}$ be the measure on $V(\mathcal{G})$ given by
\[\mu_{\mathcal{G}}(\{x\}):=\mathrm{deg}_\mathcal{G}(x),\qquad \forall x\in V(\mathcal{G});\]
this is the unique (up to scaling) invariant measure of the Markov chain $X$. Secondly, let $R_{\mathcal{G}}$ be the effective resistance metric on $V(\mathcal{G})$ when we consider $\mathcal{G}$ to be an electrical network with unit resistors placed along each edge (see \cite{DS,LPW} for elementary background on the effective resistance, and \cite[Chapter 2]{Barlow} for a careful treatment of effective resistance for infinite graphs). We have already introduced the shortest path graph distance $d_\mathcal{G}$. Finally, we write
\begin{equation}\label{cuttimes}
\mathcal{T}:=\left\{n:\:S_{[0,n]}\cap S_{[n+1,\infty)}=\emptyset\right\}
\end{equation}
for the set of cut times of $S$, where $S_{[a, b]}:= \{ S_{k}:\: a \le k \le b \}$ and $S_{[a, \infty)} := \{ S_{k}:\:k \ge a \}$ for $0 \le a \le b < \infty$. It is known that $\mathcal{T}$ is $\mathbf{P}$-a.s.\ an infinite set (see \cite{cut}), and we denote the elements of this set as $0\leq T_1<T_2<\dots$.

\begin{thm}\label{main2} For any $T\in(0,\infty)$, the following limits hold in $\mathbf{P}$-probability as $n\rightarrow\infty$.\\
(a) For some deterministic constant $\lambda\in(0,\infty)$,
\[\left(\frac{\mu_\mathcal{G}\left(S_{[0,nt]}\right)}{n}\right)_{t\in[0,T]}\rightarrow \left(\lambda t\right)_{t\in[0,T]}.\]
(b) For some deterministic slowly-varying function $\tilde{\psi}:\mathbb{N}\rightarrow \mathbb{R}$ satisfying $\tilde{\psi}(n)=(\log n)^{-\frac{1}{2}+o(1)}$,
\[\left(\frac{R_\mathcal{G}(0,S_{nt})}{n\tilde{\psi}(n)}\right)_{t\in[0,T]}\rightarrow \left(t\right)_{t\in[0,T]}.\]
(c) For some deterministic slowly-varying function $\phi:\mathbb{N}\rightarrow \mathbb{R}$ satisfying $\phi(n)=(\log n)^{-\frac{1}{2}+o(1)}$,
\[\left(\frac{d_\mathcal{G}(0,S_{nt})}{n\phi(n)}\right)_{t\in[0,T]}\rightarrow \left(t\right)_{t\in[0,T]}.\]
(d) For some deterministic constant $\tau\in(0,\infty)$,
\[\left(\frac{T_{nt}}{n(\log n)^{\frac{1}{2}}}\right)_{t\in[0,T]}\rightarrow \left(\tau t\right)_{t\in[0,T]}.\]
\end{thm}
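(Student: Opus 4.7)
The unifying idea is to exploit the cut-time structure of $S$. At any cut time $T_i$, the defining condition $S_{[0,T_i]}\cap S_{[T_i+1,\infty)}=\emptyset$ rules out any $\mathbb{Z}^4$-edge between the two halves of the trajectory, so $S_{T_i}$ is a genuine cut vertex of $\mathcal{G}$. The series law for effective resistance and the analogous elementary fact for graph distance then yield
\[
R_\mathcal{G}(0,S_{T_k})=\sum_{i<k}R_\mathcal{G}(S_{T_i},S_{T_{i+1}}),\qquad d_\mathcal{G}(0,S_{T_k})=\sum_{i<k}d_\mathcal{G}(S_{T_i},S_{T_{i+1}}),
\]
and by the Markov property the pieces $(S_{[T_i,T_{i+1}]})_{i\geq 1}$ form a stationary sequence. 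Under this cut-time reparameterisation, each of (a)--(c) becomes a renewal-style law of large numbers, with (d) providing the correspondence between the $n$-parameterisation and the cut-time count.

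I would address (d) first. Inputting the one-point estimate $\mathbf{P}(k\in\mathcal{T})\asymp(\log k)^{-1/2}$ available from the four-dimensional cut-time literature, together with a matching two-point bound on $\mathbf{P}(j,k\in\mathcal{T})$, a second-moment argument shows that $|\mathcal{T}\cap[0,n]|/(n(\log n)^{-1/2})$ concentrates. Since $n\mapsto T_n$ is the monotone inverse of the counting function and the limit $\tau t$ is continuous, a standard Dini-type inversion delivers the required functional convergence uniformly on $[0,T]$. Part (a) follows similarly with $\mu_\mathcal{G}(S_{[0,n]})$ in place of the cut-time count: shift invariance of $S$ and transience of four-dimensional random walk give $\mathbf{E}[\mu_\mathcal{G}(S_{[0,n]})]/n\to\lambda$ with $\lambda$ identifiable as the degree at a `stationary' visited site; a variance estimate yields concentration; and monotonicity of $n\mapsto\mu_\mathcal{G}(S_{[0,n]})$ upgrades pointwise convergence to uniform convergence.

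For parts (b) and (c), I would feed the cut-time decomposition above into the pointwise resistance asymptotic $R_\mathcal{G}(0,S_n)=n(\log n)^{-1/2+o(1)}$ of \cite{S}. The renewal structure along cut times gives $R_\mathcal{G}(0,S_{T_k})=k\bar{R}(1+o(1))$ for some $\bar{R}\in(0,\infty)$ whose value is pinned down by \cite{S} in combination with (d), and monotonicity of $k\mapsto R_\mathcal{G}(0,S_{T_k})$ then promotes pointwise to uniform convergence in $t$, producing (b). Part (c) is the same scheme applied to $d_\mathcal{G}$; the one genuinely new ingredient needed is finiteness and strict positivity of the per-piece mean $\mathbf{E}[d_\mathcal{G}(S_{T_i},S_{T_{i+1}})]$. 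The upper bound is the trivial one $\leq T_{i+1}-T_i$; the lower bound requires showing that a typical piece has graph distance of the same order as its length, for which the quantitative inputs on $d_\mathcal{G}$ inside a single cut-time piece should reduce to intersection estimates for $S$ akin to those underlying \cite{S}.

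The principal obstacle is achieving the second-moment control with only logarithmic slack. At the critical dimension, two-point cut-time correlations decay only like $(\log k)^{-1}$ rather than the $(\log k)^{-2}$ that a naive independence heuristic would suggest, reflecting genuine long-range dependence; and the pieces between cut times have slowly-varying typical length of order $(\log n)^{1/2}$, so standard renewal arguments require careful truncation to handle atypically long pieces. This truncation must be uniform in $t\in[0,T]$ in order to secure functional (rather than merely marginal) convergence in $\mathbf{P}$-probability, and propagating it through all four parts simultaneously, while keeping the slowly-varying prefactors $\phi$, $\tilde{\psi}$ consistent between parts (b), (c) and the input in \cite{S}, is where the technical heart of the proof should lie.
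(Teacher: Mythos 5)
Your cut-time decomposition is the right structural device, and your plans for parts (a) and (d) --- translation invariance and second-moment control on the relevant sums, then monotonicity for the functional upgrade --- are essentially the paper's approach. For (d) the paper does take the one-point asymptotic $\mathbf{P}(n\in\mathcal{T})\sim\alpha(\log n)^{-1/2}$ (from the Lawler--Puckette escape-probability estimate) and manufactures approximate independence via a localised indicator $I_{k,n}$ at scale $n(\log n)^{-9}$; the localisation error is $O(\log\log n/\log n)$, which suffices. For (a) the paper identifies $\lambda$ as the expected degree at a ``fresh'' visited point of a two-sided walk and runs a comparable variance bound.

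However, your claim that ``by the Markov property the pieces $(S_{[T_i,T_{i+1}]})_{i\geq1}$ form a stationary sequence'' is false, and this is not a minor slip: if the gap distribution $T_{i+1}-T_i$ were stationary with finite mean you would get $T_n\asymp n$, not $T_n\asymp n(\log n)^{1/2}$. The gaps have $i$-dependent distributions with slowly-growing means. Moreover, conditional on $T_i\in\mathcal{T}$, the forward path is a walk conditioned to avoid its past, so successive pieces are not even independent. Your ``renewal-style law of large numbers'' is therefore not available for the resistance and distance parts, nor is a single constant $\bar R=\mathbf{E}[R_{\mathcal{G}}(S_{T_i},S_{T_{i+1}})]$ or $\mathbf{E}[d_{\mathcal{G}}(S_{T_i},S_{T_{i+1}})]$ well-defined independently of $i$.

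The paper sidesteps any per-piece analysis for (b) and (c). It takes the one-point marginal convergence $R_\mathcal{G}(0,S_n)/(n\tilde\psi(n))\to1$ directly from Shiraishi's work, observes that the same proof carries over verbatim to $d_\mathcal{G}$ because the only two properties of $R_{\mathcal{G}}$ used there are the metric axioms and series additivity at cut times, and then upgrades marginal to functional convergence in two steps: (i) the maximal cut-time gap in $[0,n]$ is $o(n(\log n)^{-7/12})$ with high probability, and (ii) both $R_\mathcal{G}(0,S_m)$ and $d_\mathcal{G}(0,S_m)$ are, uniformly in $m\le n$, within one cut-time gap of their running maxima, which are monotone. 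This running-max comparison is the precise mechanism that replaces your appeal to monotonicity of $k\mapsto R_\mathcal{G}(0,S_{T_k})$; as you observe but do not resolve, that alone does not control the process between cut times, and step (ii) is what closes exactly that gap.
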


\begin{rem}\label{back}
(a) As is claimed in \cite{cut}, the techniques of \cite[Chapter 7]{Lawb} yield part (d) of the above result. In order for completeness, we provide details below. As for part (a), this is new, though its proof is also based on ideas from \cite{Lawb}. Concerning part (b), the one-dimensional marginal of the process in question was dealt with in \cite{S}. Here, we do the additional work to derive the functional convergence statement. (Since the resistance distance from 0 to $S_n$ is not monotone increasing, this is not a completely trivial exercise.) Part (c) is handled in the same way as part (b).\\
(b) The $\psi$ of Theorem \ref{main1} is given by $\lambda\times \tilde{\psi}$, where $\lambda$ and $\tilde{\psi}$ are given above.\\
(c) We conjecture that it is possible to take $\phi(n)=c_1(\log n)^{-1/2}$, $\tilde{\psi}(n)=c_2(\log n)^{-1/2}$ in the above result (and therefore also $\psi(n)= c_3(\log n)^{-1/2}$ in Theorem \ref{main1}).
\end{rem}

Apart from the scaling limits of Theorem \ref{main1}, given Theorem \ref{main2}, various consequences for the simple random walk $X$ stem from general techniques of \cite{LLT} and \cite{KM}. In particular, we have the following results describing the growth rate of the quenched exit times and the scaling limit of the heat kernel of the process. To state these, we introduce the notation
\[\tau_r^\mathcal{G}:=\inf\left\{n\geq 0:\:d_\mathcal{G}(0,X_n)\geq r\right\}\]
for the time that $X$ exits a $d_\mathcal{G}$-ball of radius $r$ centred at the origin, and
\[p_{n}^\mathcal{G}\left(x,y\right):=\frac{\mathbf{P}_x^\mathcal{G}\left(X_n=y\right)+\mathbf{P}_x^\mathcal{G}\left(X_{n+1}=y\right)}{2\mathrm{deg}_\mathcal{G}(y)}\]
for the (smoothed) transition density of $X$. We note that \cite[Theorem 1.4]{Croy} showed that $p_{n}^\mathcal{G}(0,0)$ deviated from $n^{-1/2}$ by a logarithmic amount, but did not pin down the precise exponent of the logarithm.

\begin{cor}\label{srwcor} (a) It holds that
\[\lim_{\Lambda\rightarrow\infty}\inf_{r\geq 1}\mathbf{P}\left(\Lambda^{-1}\leq\frac{\mathbf{E}_0^\mathcal{G}\left(\tau^\mathcal{G}_r\right)}{r^2\tilde{\psi}(r)\phi(r)^{-2}}\leq\Lambda\right)=1.\]
(b) For any compact interval $I\subseteq (0,\infty)$ and $x_0\in[0,\infty)$, it holds that
\[\sup_{t\in I}\sup_{x\in[0,x_0]}\left|\lambda np_{\lfloor tn^2\psi(n)\rfloor}^\mathcal{G}\left(0,S_{\lfloor nx\rfloor}\right)-\sqrt{\frac{2}{\pi t}}e^{-\frac{x^2}{2t}}\right|\rightarrow 0,\]
in $\mathbf{P}$-probability.
\end{cor}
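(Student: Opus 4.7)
The plan is to deduce both parts from Theorem \ref{main2} by invoking the general techniques of \cite{LLT} and \cite{KM} that the authors signpost.

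For part (a), I would combine the classical volume--resistance sandwich for mean exit times with the scalings of Theorem \ref{main2}. Writing $B_\mathcal{G}(0,r)$ for the closed $d_\mathcal{G}$-ball of radius $r$ about $0$, one has
\[
c_1\, R_\mathcal{G}\!\left(0, B_\mathcal{G}(0,r)^c\right) \mu_\mathcal{G}\!\left(B_\mathcal{G}(0,r)\right) \leq \mathbf{E}_0^\mathcal{G}(\tau_r^\mathcal{G}) \leq c_2\, R_\mathcal{G}\!\left(0, B_\mathcal{G}(0,r)^c\right) \mu_\mathcal{G}\!\left(B_\mathcal{G}(0,r)\right),
\]
where the upper bound is essentially the commute-time identity and the lower bound is a standard consequence of a Markov-type argument on the resistance form (see, e.g., \cite{Barlow,LLT}). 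Theorem \ref{main2}(c) then implies that $B_\mathcal{G}(0,r)$ is sandwiched between two trail segments $\{S_k : k \leq N^\pm(r)\}$, where $N(r)$ is the functional inverse of $n\mapsto n\phi(n)$, satisfying $N(r)\sim r/\phi(r)$ by slow variation of $\phi$. Theorem \ref{main2}(a) then gives $\mu_\mathcal{G}(B_\mathcal{G}(0,r)) \asymp r/\phi(r)$, while Theorem \ref{main2}(b) yields $R_\mathcal{G}(0, B_\mathcal{G}(0,r)^c) \asymp r\tilde{\psi}(r)/\phi(r)$, and the product is exactly the asserted order.

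For part (b), the strategy is to apply the heat kernel convergence theorem of \cite{KM}. Given a resistance-form scaling limit for the random walk together with sufficient regularity of the prelimit volume and resistance, that framework delivers uniform convergence of the rescaled transition densities. The process-level input is Theorem \ref{main1}(b), with Theorem \ref{main2}(a)--(b) supplying the needed volume and resistance regularity and Theorem \ref{main2}(d) providing the cut-time structure that underpins the tree-like geometry of the limit. The prefactor $\lambda n$ converts the density with respect to $\mu_\mathcal{G}$ to a density with respect to the scaled reference measure on the trail parametrisation. In this parametrisation, the trail-index coordinate of the limit process $W_{|B_t|}$ is just $|B_t|$, whose density on $[0,\infty)$ at $x$ is precisely $\sqrt{2/(\pi t)}\,e^{-x^2/(2t)}$; uniformity on $I \times [0,x_0]$ follows from standard continuity properties of this Gaussian density together with the uniform moduli built into \cite{KM}.

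The main obstacle in both parts is transferring the finite-time, fixed-scale convergences of Theorems \ref{main1} and \ref{main2} into uniform control. For part (a), tightness of the ratio $\mathbf{E}_0^\mathcal{G}(\tau_r^\mathcal{G})/(r^2\tilde{\psi}(r)\phi(r)^{-2})$ uniformly across all $r\geq 1$ is not automatic from fixed-scale convergence: it requires two-sided volume and resistance estimates holding simultaneously at all scales, for which one exploits monotonicity of $\mu_\mathcal{G}(B_\mathcal{G}(0,\cdot))$, the trail-monotonicity of the resistance to a growing annulus, and the slow variation of $\phi$ and $\tilde{\psi}$ to interpolate between dyadic scales. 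For part (b), the delicate technical step is verifying the equicontinuity hypotheses of the \cite{KM} framework, so that process-level weak convergence can be promoted to pointwise, then uniform, convergence of the smoothed densities at the scaled trail vertices $S_{\lfloor nx\rfloor}$.
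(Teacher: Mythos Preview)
Your overall plan for part (a) matches the paper's: verify volume and resistance estimates and then invoke the exit-time machinery of \cite{KM}. However, there is a genuine gap in your resistance step. You write that ``Theorem \ref{main2}(b) yields $R_\mathcal{G}(0, B_\mathcal{G}(0,r)^c) \asymp r\tilde{\psi}(r)/\phi(r)$'', but Theorem \ref{main2}(b) only controls the \emph{point-to-point} resistance $R_\mathcal{G}(0,S_n)$. Since $R_\mathcal{G}(0,A)\leq R_\mathcal{G}(0,x)$ for every $x\in A$, this gives only the upper bound; the lower bound on the resistance to the complement of a ball is a separate issue and does not follow from Theorem \ref{main2}(b) alone. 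The paper supplies this via Proposition \ref{p2}, whose proof uses the cut-time structure: any path from $0$ to $B_\mathcal{G}(0,rn\tilde{\psi}(n))^c$ must pass through the last cut point $S_{T_{R(n,r)}}$ inside the ball, so $R_\mathcal{G}(0,B_\mathcal{G}(0,rn\tilde{\psi}(n))^c)\geq R_\mathcal{G}(0,S_{T_{R(n,r)}})$. You need an argument of this type before the \cite{KM} machinery can be invoked. (Organisationally, the paper first checks \cite[Assumption 1.2(1)]{KM} for the $R_\mathcal{G}$-metric and applies \cite[Proposition 1.3]{KM} to get the result for the $R_\mathcal{G}$-exit time $\tilde{\tau}_r^\mathcal{G}$, and only then transfers to $d_\mathcal{G}$-balls via Theorem \ref{main2}(b),(c); this absorbs your uniformity-in-$r$ concern directly into the cited proposition.)

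For part (b), your strategy is close to the paper's, but the attribution is off. The local limit theorem input is from \cite{LLT}, not \cite{KM}: the paper uses \cite[Proposition 12]{LLT} to obtain the equicontinuity estimate
\[
\sup_{t\in I}\left|\lambda np_{tn^2\psi(n)}^\mathcal{G}(0,S_{nx})-\lambda np_{tn^2\psi(n)}^\mathcal{G}(0,S_{ny})\right|^2\leq \frac{C\,R_\mathcal{G}(S_{nx},S_{ny})\,h_\mathcal{G}^{-1}(Cn^2\psi(n))}{n^2\psi(n)^2},
\]
and then follows the scheme of \cite[Theorem 1]{LLT} to sandwich the density by ball-hitting probabilities. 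Also, the process-level input actually used is Theorem \ref{main1}(a), not (b): one needs to control $\mathbf{P}_0^\mathcal{G}(d_\mathcal{G}(0,X_{tn^2\psi(n)})/(n\phi(n))\in\cdot)$, i.e.\ the law of the intrinsic-distance coordinate, because the reference measure $\mu_\mathcal{G}$ pushes forward to (approximately) Lebesgue measure under that coordinate. Your ``trail parametrisation'' remark is exactly this idea, so you are morally invoking Theorem \ref{main1}(a), not the $\mathbb{R}^4$-valued convergence of (b).
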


\begin{rem}
In \cite[Theorem 1.2.2]{S}, it was stated that there exist constants $c_1,c_2\in(0,\infty)$ such that, for $\mathbf{P}$-a.e.\ realisation of $\mathcal{G}$,
\[c_1n^{-1/2}\tilde{\psi}(n)^{1/2}\leq p_{n}^\mathcal{G}\left(0,0\right)\leq c_2n^{-1/2}\tilde{\psi}(n)^{1/2}\]
for all large $n$. (Note that, in \cite{S}, $\tilde{\psi}(n)$ was defined to be equal to $\mathbf{E}(R_\mathcal{G}(0,S_n))/n$, and this is consistent with the current article, since the conclusion of Theorem \ref{main2}(b) holds for this particular choice of $\tilde{\psi}$.) However, the proof in \cite{S} incorporated a misapplication of \cite[Proposition 3.2]{KM}, in that it did not include a verification of the resistance lower bound of \cite[Equation (3.3)]{KM}. This gap is filled in \cite{S-erratum}, which applies the estimates of \cite{S} to obtain the missing resistance estimate. The latter can be considered a $\mathbf{P}$-a.s.\ version of Proposition \ref{p2} below (or a quantitative version of \eqref{rcond}).
\end{rem}

The remainder of the paper is organised as follows. In Section \ref{sec2}, we derive the fundamental estimates on the underlying graph $\mathcal{G}$ that are stated as Theorem \ref{main2}. In Section \ref{sec3}, we then explain how these can be used to obtain the scaling limit for the random walk $X$ of Theorem \ref{main1}, and also give a proof of Corollary \ref{srwcor}. Throughout the paper, we write $a_{n} = O (b_{n} )$ if $|a_{n}| \le C b_{n} $ for some absolute constant $C > 0$. Also, we write $a_{n} = o (b_{n} )$ if $a_{n} / b_{n} \to 0$. We let $c, C,\dots $ denote arbitrary constants in $(0,\infty)$ that may change from line to line. To simplify notation, we sometimes use a continuous variable, $x$ say, where a discrete argument is required, with the understanding that it should be treated as $\lfloor x \rfloor$.

\section{Estimates for the underlying graph}\label{sec2}

We establish the parts of Theorem \ref{main2} concerning the measure, the metrics and cut times separately.

\subsection{Volume scaling}

Towards proving the volume asymptotics of Theorem \ref{main2}(a), similarly to \eqref{vdef} and \eqref{edef}, for each for $0\leq m<n$, we define a subgraph ${\cal G}_{m,n} = (V ({\cal G}_{m,n} ), E({\cal G}_{m,n} ) )$ of $\mathcal{G}$ by setting
\begin{equation}\label{traceab}
V ( {\cal G}_{m,n} ) := \left\{ S_k:\: m\leq k\leq n \right\},\qquad E ({\cal G}_{m,n} ) := \left\{ \{S_k, S_{k+1} \}:\: m \le k \le n-1 \right\}.
\end{equation}
We moreover let ${\cal G}_{n}:={\cal G}_{0,n}$.

\begin{proof}[Proof of Theorem \ref{main2}(a)]
For each $k, n \ge 0$, we introduce a random variable $Y_{k}^{(n)}$ by letting
\[Y_{k}^{(n)} := \left| \left\{e \in E ({\cal G}_{k+1} ):\:S_k \in e  \right\} \right| \times  {\bf 1}_{\{S_k\in S_{[0,n]}\text{ and }S_m \neq S_k \text{ for all } m \ge k+1 \}},\]
where $|A|$ stands for the cardinality of a set $A$. In particular, for $Y_{k}^{(n)}$ to be strictly positive, we require $k$ to be the time of the last visit by $S$ to the vertex $S_k\in S_{[0,n]}$. It follows that
\begin{equation}\label{iikae}
\mu_{{\cal G}} \left(S_{[0,n]}\right) = \sum_{k= 0}^{\infty} Y_{k}^{(n)}.
\end{equation}
To see this, for each $x \in S_{[0,n]}$, we set $\sigma_{x} = \max \{ k \ge 0:\:S_k = x \}$ to be the last time that the simple random walk $S$ hits $x$. Since $Y_{k}^{(n)}>0$ if and only if $k = \sigma_{x}$ for some $x \in S_{[0,n]}$, it follows that
\[ \sum_{k= 0}^{\infty} Y_{k}^{(n)} = \sum_{x \in S_{[0,n]}} Y_{\sigma_{x}}^{(n)} =  \sum_{x \in  S_{[0,n]}}  \left| \left\{e \in E ({\cal G}_{\sigma_{x}+1} ):\:x \in e  \right\} \right| = \mu_{{\cal G}} \left( S_{[0,n]}\right),\]
which gives \eqref{iikae}.

Next we will compare the volume of $S_{[0,n]}$ with $\sum_{k= 0}^{n} Y_{k}^{(n)}$. To do this, we let $I_{k}$ be the indicator function of the event that $k$ is a cut time, i.e.,
\begin{equation}\label{ikdef}
I_{k}: = {\bf 1}_{\{k\in\mathcal{T}\}},
\end{equation}
where $\mathcal{T}$ was defined at \eqref{cuttimes}. Also, we set
\[F_{n} := \left\{ I_{k} = 0 \text{ for all } k \in [n, n + n (\log n)^{-6} ] \right\},\]
for the event that there are no cut times in $ [n, n + n (\log n)^{-6} ]$. It follows from \cite[Lemma 7.7.4]{Lawb} that
\begin{equation}\label{law-old}
\mathbf{P} (F_{n}) \le \frac{C\log \log n}{\log n},
\end{equation}
where $C\in(0,\infty)$ is a constant. Suppose that there exists a cut time in $ [n, n + n (\log n)^{-6} ]$. Then we have $Y_{k}^{(n)} = 0$ for all $k \ge n + n ( \log n )^{-6}$. Since $Y_{k}^{(n) } \le 8$, it follows that on the event $F_{n}^{c}$,
\[\left| \sum_{k= 0}^{\infty} Y_{k}^{(n)} - \sum_{k= 0}^{n} Y_{k}^{(n)} \right| = \sum_{k= n+1}^{n + n ( \log n)^{-6}} Y_{k}^{(n)} \le 8 n (\log n)^{-6}.\]
Combining this with \eqref{iikae} and \eqref{law-old}, we find that
\begin{equation}\label{darui}
\mathbf{P} \left( \left|\mu_{{\cal G}} \left(S_{[0,n]} \right) - \sum_{k= 0}^{n} Y_{k}^{(n)} \right| > 8 n (\log n)^{-6} \right)  \le \frac{C\log \log n}{\log n},
\end{equation}
and so, in order to show the desired convergence in $\mathbf{P}$-probability of $n^{-1}\mu_{{\cal G}}(S_{[0,n]})$, it will suffice to prove that, as $n \to \infty$,
\begin{equation}\label{ylim}
\frac{  \sum_{k= 0}^{n} Y_{k}^{(n)}}{n} \to \lambda
\end{equation}
in $\mathbf{P}$-probability for some deterministic constant $\lambda \in (0, \infty)$.

We now derive an upper bound on the variance of $\sum_{k= 0}^{n} Y_{k}^{(n)}$. To do this, take $0 \le k < l \le n$ with $l-k \ge 3 \sqrt{n}$. We want to control the dependence between $Y_{k}^{(n)}$ and $Y_{l}^{(n)}$. Define an event $G$ by setting
\[G := \left\{ S_k \notin S _{[k + \sqrt{n}, \infty )},\: S_l \notin S_{[0, l- \sqrt{n} ]} \right\}.\]
We also let
\begin{align*}
Z_{k}& := \left| \left\{ e \in E ({\cal G}_{k+1} ):\: S_k \in e  \right\} \right| \times  {\bf 1}_{\{S_m \neq S_k\text{ for all } k+1 \le m \le k+\sqrt{n} \}}, \\
W_{l}& := \left| \left\{ e \in E ({\cal G}_{l-\sqrt{n}, l+1} ):\: S_l \in e  \right\} \right|  \times  {\bf 1}_{\{S_m \neq S_l \text{ for all }  m \ge l \}}.
\end{align*}
The condition $l-k \ge 3 \sqrt{n}$ ensures that $Z_{k}$ and $W_{l}$ are independent. Also, we note that on the event $G$, it holds that $Y_{k}^{(n)} = Z_{k}$ and $Y_{l}^{(n)} = W_{l}$. The local central limit theorem (see \cite[Theorem 1.2.1]{Lawb}) implies that
\begin{align*}
\mathbf{P} (G^{c} ) & \le \mathbf{P} \left( S_k \in S _{[k + \sqrt{n}, \infty )} \right) + \mathbf{P} \left( S_l \in S_{[0, l- \sqrt{n} ]} \right) \\
&\le 2 \mathbf{P} \left( S_{0} \in S_{[\sqrt{n}, \infty )} \right) \\
&\leq 2 \sum_{k = \sqrt{n} }^{\infty} \mathbf{P}  \left( S_{k} = S_{0} \right) \\
& \le C \sum_{k = \sqrt{n} }^{\infty} k^{-2} \\
&\le  \frac{C}{\sqrt{n}},
\end{align*}
where constants change line to line. Therefore we have
\[\left| \mathbf{E} \left( Y_{k}^{(n) } \right) -\mathbf{ E} \left(Z_{k} \right) \right| \le  \frac{C}{\sqrt{n}},\qquad\left| \mathbf{E} \left( Y_{l}^{(n) } \right) -\mathbf{ E} \left(W_l\right) \right| \le  \frac{C}{\sqrt{n}}.\]
Since $Z_{k}$ and $W_{l}$ are independent, we consequently obtain that: for $0 \le k < l \le n$ with $l-k \ge 3 \sqrt{n}$,
\begin{align*}
\lefteqn{\mathbf{E} \left( \left( Y_{k}^{(n) } - \mathbf{E} ( Y_{k}^{(n) } ) \right) \left( Y_{l}^{(n) } - \mathbf{E} ( Y_{l}^{(n) } ) \right) \right)} \\
&\leq \mathbf{E} \left( \left( Y_{k}^{(n) } - \mathbf{E} ( Y_{k}^{(n) } ) \right) \left( Y_{l}^{(n) } - \mathbf{E} ( Y_{l}^{(n) } ) \right) \mathbf{1}_{G}\right)+C\mathbf{P}(G^c)\\
&\leq  \mathbf{E} \left( \left( Z_k - \mathbf{E} ( Y_{k}^{(n) } ) \right) \left( W_l - \mathbf{E} ( Y_{l}^{(n) } ) \right) \right)+C\mathbf{P}(G^c)\\
&=\mathbf{E} \left( Z_k - \mathbf{E} ( Y_{k}^{(n) } ) \right) \mathbf{E}\left( W_l - \mathbf{E} ( Y_{l}^{(n) } ) \right) +C\mathbf{P}(G^c)\\
&\leq \frac{C}{\sqrt{n}},
\end{align*}
and it readily follows from this that
\[\text{Var} \left( \sum_{k= 0}^{n} Y_{k}^{(n)} \right) \le C n^{\frac{3}{2}}.\]
In particular, this bound yields
\begin{equation}\label{rrr}
\mathbf{P}\left(\left|\sum_{k= 0}^{n} Y_{k}^{(n)}-\mathbf{E}\left( \sum_{k= 0}^{n} Y_{k}^{(n)} \right) \right|\geq n^{7/8}\right)\leq \frac{C}{n^{1/4}}.
\end{equation}

The conclusion of the previous paragraph means that we have reduced the problem of establishing \eqref{ylim} to showing the asymptotic linearity of the deterministic sequence $\mathbf{E}( \sum_{k= 0}^{n} Y_{k}^{(n)})$, and that is what we do now. Let $S^{1}$ and $S^{2}$ be independent simple random walks on $\mathbb{Z}^{4}$ started at the origin. Define a random variable $Y$ by setting
\[Y = \left| \left\{ \{S^{1}_0, S^{1}_1 \}\right\} \cup \left\{ e \in E ({\cal G}^{2} ):\: 0 \in e  \right\} \right|  \times  {\bf 1}_{\{  S^{1}_k \neq 0 \text{ for all } k \ge 1 \}},\]
where the graph ${\cal G}^{2} = (V ({\cal G}^{2} ), E({\cal G}^{2} ) )$ is defined from $S^2$ as at \eqref{vdef} and \eqref{edef}. We will define the $\lambda$ that appears in the statement of the result as
\[\lambda:= \mathbf{E} (Y).\]
Now, take $\sqrt{n} \le k \le n$, and define two events $H$ and $H'$ by setting
\begin{align*}
H &= \left\{ S_m \neq S_k \text{ for all } m \in [0, k- \sqrt{n} ] \cup [k + \sqrt{n}, \infty ) \right\}, \\
H'&= \left\{ S^{1}_m \neq 0 \text{ and } S^{2}_m \neq 0 \text{ for all } m \ge \sqrt{n} \right\}.
\end{align*}
By again applying the local central limit theorem (see \cite[Theorem 1.2.1]{Lawb}), we see that
\begin{align*}
\mathbf{P}(H^{c}) &\le 2 \mathbf{P} \left( S_{0} \in S_{[\sqrt{n}, \infty )} \right) \leq 2 \sum_{k = \sqrt{n} }^{\infty} \mathbf{P}  \left( S_{k} = S_{0} \right)  \le C \sum_{k = \sqrt{n} }^{\infty} k^{-2} \le  \frac{C}{\sqrt{n}}, \\
\mathbf{P}\left( (H')^{c} \right)& \le 2 \mathbf{P} \left( 0 \in S^{1} [\sqrt{n}, \infty ) \right) \le  \frac{C}{\sqrt{n}}.
\end{align*}
With this in mind, we define
\begin{align*}
&U_{k} = \left| \left\{ e \in E ({\cal G}_{k-\sqrt{n}, k+1} ):\: S_k \in e  \right\} \right| \times  {\bf 1}_{\{   S_m \neq S_k \text{ for all } m \in [k+1, k + \sqrt{n}] \}},\\
&U = \left| \left\{\{ S^{1} (0), S^{1} (1) \}\right\} \cup \left\{ e \in E ({\cal G}^{2}_{\sqrt{n}} ) :\: 0 \in e  \right\} \right|  {\bf 1}_{\{   S^{1}_m \neq 0 \text{ for all } m \in [1,  \sqrt{n}]\}},
\end{align*}
where the graph ${\cal G}^{2}_{m} = (V ({\cal G}^{2}_{m} ), E({\cal G}^{2}_{m} ) )$ is defined from $S^2$ similarly to \eqref{traceab}. For $\sqrt{n} \le k \le n$, on the event $H$ (respectively $H'$), we have $Y^{(n)}_{k} = U_{k}$ (respectively $Y= U$). Thus the translation invariance and time-reversibility of the simple random walk yields that, for $\sqrt{n} \le k \le n$,
\begin{align*}
\mathbf{E} \left(Y_{k}^{(n)} \right) &= \mathbf{E} \left(Y_{k}^{(n)} \mathbf{1}_H \right) +O(n^{-1/2})\\
 &=\mathbf{E} \left(U_k \mathbf{1}_H \right) +O(n^{-1/2})\\
 &= \mathbf{E} \left(U_{k} \right) +O(n^{-1/2})\\
&= \mathbf{E} (U) +O(n^{-1/2})\\
& =\mathbf{E} \left(U\mathbf{1}_{H'}\right)  +O(n^{-1/2})\\
& = \mathbf{E} \left(Y\mathbf{1}_{H'}\right) +O(n^{-1/2})\\
&= \lambda +O(n^{-1/2}).
\end{align*}
This gives
\[\mathbf{E} \left( \sum_{k= 0}^{n} Y_{k}^{(n)} \right) = \lambda n + O (\sqrt{n} ).\]
Recalling \eqref{darui} and \eqref{rrr}, the result at \eqref{ylim} follows.

Finally, since $\mu_\mathcal{G}(S_{[0,n]})$ is clearly monotone in $n$, extending from the above convergence to the functional statement of Theorem \ref{main2}(a) is elementary.
\end{proof}

\subsection{Resistance and distance scaling}

The essential work for the resistance and distance asymptotics of Theorem \ref{main2}(b),(c) was completed in \cite{S}. In particular the following result concerning the effective resistance was established. We simply highlight how this, and the techniques used to prove it, can be adapted to our current purposes.

\begin{lem}[{\cite[Theorems 1.2.1 and 1.2.3]{S}}]\label{rlem} For some deterministic slowly-varying function $\tilde{\psi}:\mathbb{N}\rightarrow \mathbb{R}$ satisfying $\tilde{\psi}(n)=(\log n)^{-\frac{1}{2}+o(1)}$,
\[\frac{R_\mathcal{G}(0,S_{n})}{n\tilde{\psi}(n)}\rightarrow 1\]
in $\mathbf{P}$-probability.
\end{lem}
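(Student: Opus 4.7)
The plan is to identify $\tilde{\psi}(n)$ with a smoothed version of $n^{-1}\mathbf{E}[R_{\mathcal{G}}(0,S_n)]$, so that both its asymptotic behaviour and slow variation become questions about a deterministic sequence, and then to reduce the lemma to a concentration statement for $R_{\mathcal{G}}(0,S_n)$ about its mean. Following the strategy of \cite{S}, all three ingredients rely on the cut-time structure of four-dimensional simple random walk.

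For the two-sided estimate on the mean, the lower bound $\mathbf{E}[R_{\mathcal{G}}(0,S_n)]\gtrsim n(\log n)^{-1/2}$ is immediate from Nash--Williams: each cut time $T_i\le n$ provides a singleton cut set separating $0$ from $S_n$ in $\mathcal{G}$, giving $R_{\mathcal{G}}(0,S_n)\ge |\mathcal{T}\cap[0,n]|$, which is of the claimed order by Theorem \ref{main2}(d). For the matching upper bound I would construct a unit flow from $0$ to $S_n$ supported on the trace $S_{[0,n]}$ and estimate its energy using four-dimensional Green's function bounds combined with moment estimates for self-intersection local times at the critical dimension. This is the step that genuinely produces the $(\log n)^{-1/2+o(1)}$ factor and is the main obstacle.

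For concentration, the key observation is that across any cut time $T_i$ the vertex set splits as $S_{[0,T_i]}\sqcup S_{[T_i+1,\infty)}$ with only the single edge $\{S_{T_i},S_{T_i+1}\}$ connecting the two parts, so iterating the series law expresses $R_{\mathcal{G}}(0,S_n)$ exactly as a sum of resistances across the excursions between consecutive cut times in $[0,n]$, plus boundary contributions from $0$ to the first cut time and from the last cut time to $S_n$. Conditional on the cut-time configuration these excursion resistances are independent and of bounded mean, and by Theorem \ref{main2}(d) there are of order $n(\log n)^{-1/2}$ of them. Chebyshev's inequality then reduces the problem to uniform second-moment bounds on the individual excursions, which follow from the same Green's function estimates used to prove the upper bound on the mean. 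Finally, slow variation of $\tilde{\psi}$ can be arranged by taking a monotone envelope of $n\mapsto n^{-1}\mathbf{E}[R_{\mathcal{G}}(0,S_n)]$, and using the cut-time decomposition to show that $\mathbf{E}[R_{\mathcal{G}}(0,S_{tn})]/\mathbf{E}[R_{\mathcal{G}}(0,S_n)]\to t$ for each fixed $t>0$.
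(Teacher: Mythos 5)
The paper does not prove this lemma; it is cited directly from \cite{S} (Theorems 1.2.1 and 1.2.3 there). The only hint given about the argument appears in the proof of Lemma \ref{dlem}, which notes that \cite{S} uses just the metric property of $R_\mathcal{G}$ and the series-law additivity across cut times --- the same skeleton as your sketch --- but the substantial work is hidden in the technical estimates of \cite{S}.

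The critical gap in your proposal is the claim that the excursion resistances between consecutive cut times ``are independent and of bounded mean'' conditionally on the cut-time configuration. By Theorem \ref{main2}(d) there are $\sim \alpha n(\log n)^{-1/2}$ such blocks in $[0,n]$, so if the block means were bounded the series-law decomposition would force $R_\mathcal{G}(0,S_n)\asymp n(\log n)^{-1/2}$, i.e.\ $\tilde{\psi}(n)\asymp (\log n)^{-1/2}$ with no $o(1)$ in the exponent. But Remark \ref{back}(c) states precisely that refinement as an open conjecture; the lemma (and \cite{S}) establish only $\tilde{\psi}(n)=(\log n)^{-1/2+o(1)}$, which leaves room for the typical block resistance to grow as $(\log n)^{o(1)}$ in $n$. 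So the bounded-mean step would prove strictly more than what is known --- pinning down or ruling out that slowly-growing contribution is exactly the unresolved difficulty --- and cannot be invoked as a byproduct of the Green's function estimates. A secondary issue is that conditional independence of the excursions fails: the event that $T_i$ is a cut time requires the global disjointness $S_{[0,T_i]}\cap S_{[T_i+1,\infty)}=\emptyset$, which couples all blocks simultaneously, so a conditionally i.i.d.\ Chebyshev argument cannot be run off the shelf. The second-moment estimates that \cite{S} actually develops (see for instance \cite[Proposition 3.1.1]{S}, as used in the appendix of the present paper) are considerably more involved than a block-wise independence computation.
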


The corresponding result for the graph distance is as follows.

\begin{lem}\label{dlem} For some deterministic slowly-varying function $\phi:\mathbb{N}\rightarrow \mathbb{R}$ satisfying $\phi(n)=(\log n)^{-\frac{1}{2}+o(1)}$,
\[\frac{d_\mathcal{G}(0,S_{n})}{n\phi(n)}\rightarrow 1\]
in $\mathbf{P}$-probability.
\end{lem}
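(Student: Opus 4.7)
The plan is to mirror the strategy used in \cite{S} to prove Lemma \ref{rlem}, adapting each step from effective resistance to graph distance. The key structural fact making the two arguments parallel is that if $T$ is a cut time of $S$, then $S_T$ is a cut vertex of $\mathcal{G}$ separating $0$ from $S_m$ for every $m>T$, so both $R_\mathcal{G}$ and $d_\mathcal{G}$ are \emph{additive} along cut-time vertices. Letting $J_n$ denote the number of cut times $T_j\leq n$, we therefore have the exact identity
\[d_\mathcal{G}(0,S_n)=d_\mathcal{G}(0,S_{T_1})+\sum_{i=1}^{J_n-1}d_\mathcal{G}(S_{T_i},S_{T_{i+1}})+d_\mathcal{G}(S_{T_{J_n}},S_n),\]
which plays the role of the series-law decomposition used for $R_\mathcal{G}$ in \cite{S}. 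By Theorem \ref{main2}(d), $J_n$ is of order $n(\log n)^{-1/2}$, and the boundary pieces are bounded above by $T_1$ and $n-T_{J_n}$, both of which are $o(n(\log n)^{-1/2})$ in probability.

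Next I would analyse $\mathbf{E}[d_\mathcal{G}(0,S_n)]$ through the cut-time decomposition. The increments $d_\mathcal{G}(S_{T_i},S_{T_{i+1}})$ are functionals of the disjoint walk pieces $(S_{T_i},S_{T_i+1},\ldots,S_{T_{i+1}})$, which are essentially independent by the defining cut-time property. Define $\phi(n):=n^{-1}\mathbf{E}[d_\mathcal{G}(0,S_n)]$. Using the domination arguments of \cite{S}, in particular the 4D intersection probability estimates for independent walks, I would show that $\phi$ is slowly varying with $\phi(n)=(\log n)^{-1/2+o(1)}$. The lower bound $\phi(n)\geq (\log n)^{-1/2+o(1)}$ comes almost for free from $d_\mathcal{G}\geq R_\mathcal{G}$ combined with Lemma \ref{rlem} and the deterministic bound $R_\mathcal{G}(0,S_n)\leq n$, which supplies the uniform integrability needed to upgrade the probability limit to an expectation bound. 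The matching upper bound uses the decomposition above together with control on the expected graph distance across a single cut-time block, the latter being the analogue for $d_\mathcal{G}$ of the per-block resistance estimates in \cite{S}.

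Convergence in probability of $d_\mathcal{G}(0,S_n)/(n\phi(n))$ to $1$ then follows from a Chebyshev argument, where near-independence of the blocks is used to show $\mathrm{Var}(d_\mathcal{G}(0,S_n))=o((n\phi(n))^2)$, exactly as in the resistance case. The main obstacle is the upper bound on $\phi(n)$: while the resistance of a cut-time block can be expressed in terms of escape probabilities that are directly amenable to the intersection-probability analysis of \cite{S}, the graph distance is a more combinatorial quantity, corresponding essentially to a loop-erased path length through the block. Obtaining the correct logarithmic order in expectation therefore requires adapting the 4D intersection estimates of \cite{S} to bound such path lengths, which I anticipate to be the most delicate part of the argument.
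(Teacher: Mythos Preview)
Your proposal is essentially the paper's approach: both recognise that the cut-time additivity of $d_\mathcal{G}$ allows the argument of \cite{S} for $R_\mathcal{G}$ to be repeated. The paper's proof is more economical, however. It simply observes that the \emph{only} properties of $R_\mathcal{G}$ actually used in \cite{S} are (i) that it is a metric, and (ii) that it is additive across cut-time vertices, i.e.\ if $S_{[m,k]}\cap S_{[k+1,n]}=\emptyset$ then $R_{\mathcal{G}_{m,n}}(S_m,S_n)=R_{\mathcal{G}_{m,k}}(S_m,S_k)+R_{\mathcal{G}_{k,n}}(S_k,S_n)$. Since $d_\mathcal{G}$ satisfies both, the entire proof of \cite{S} transfers verbatim with $R_\mathcal{G}$ replaced by $d_\mathcal{G}$, with no new work required.

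In particular, your anticipated ``main obstacle'' --- obtaining the upper bound $\phi(n)\leq (\log n)^{-1/2+o(1)}$ via escape-probability or loop-erasure estimates specific to graph distance --- does not actually arise. The per-block analysis in \cite{S} never invokes resistance-specific structure of that kind; it relies only on the metric and additivity axioms (together with the trivial a priori bound by path length, which $d_\mathcal{G}$ also satisfies). So the graph-distance versions of all the intermediate estimates follow for free, and there is no need to re-derive them from scratch. Your lower-bound shortcut $d_\mathcal{G}\geq R_\mathcal{G}$ is a nice observation but is likewise unnecessary once one sees that the whole argument is axiom-driven.
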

\begin{proof} The properties of the effective resistance used to prove Lemma \ref{rlem} were simply that:
\begin{itemize}
\item $R_{G} (\cdot, \cdot)$ is a metric on $G$,
\item if $0\leq m<k<n$ are such that $S_{[m, k]} \cap S_{[k+1, n]} = \emptyset$, then
\[R_{{\cal G}_{m,n}} \left( S_m, S_n \right) = R_{{\cal G}_{m,k}}\left( S_m, S_k \right) + R_{{\cal G}_{k,n}}\left( S_k, S_n \right),\]
where $R_{{\cal G}_{m,n}}$ is the effective resistance metric for the graph ${{\cal G}_{m,n}}$, as defined at \eqref{traceab} (with unit resistors placed along edges).
\end{itemize}
These two properties are clearly satisfied when we replace the effective resistance by the shortest path graph distance. So we have the same results for the graph distance. The easy details are left to the reader.
\end{proof}

As we hinted in the introduction, the functional convergence statements of Theorem \ref{main2}(b), (c)
%Can remove space if not needed in formatting.
do not immediately follow from Lemmas \ref{rlem} and \ref{dlem} due to the non-monotonicity of $R_\mathcal{G}(0,S_{n})$ and $d_\mathcal{G}(0,S_{n})$. To deal with this issue, we will apply the following observation about the gaps between cut times.

\begin{lem}\label{tgap} Define ${\cal I}_{n} = \{ i \ge 2 \ | \ T_{i} \le n  \}$ and $T_{(n)}:=T_{\sup{\mathcal{I}_n}}$. It then holds that, as $n\to\infty$,
\[\frac{\max_{i\in {\cal I}_n}\left(T_{i}-T_{i-1}\right)+(n-T_{(n)})}{n(\log n)^{-7/12}}\to 0\]
in $\mathbf{P}$-probability.
\end{lem}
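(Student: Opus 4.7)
The plan is to cover $[0,n]$ by consecutive blocks $I_0,I_1,\dots,I_{K-1}$ of length $\ell := \epsilon n(\log n)^{-7/12}/2$, for arbitrary fixed $\epsilon > 0$ (so $K = O((\log n)^{7/12})$), and to show via a union bound that, with high $\mathbf{P}$-probability, every block contains at least one cut time. On that good event, for any $i \in \mathcal{I}_n$ the cut times $T_{i-1}$ and $T_i$ must lie in at most two consecutive blocks---if a whole block sat strictly between them, the cut time it contained would contradict consecutiveness of $T_{i-1}$ and $T_i$---so $T_i - T_{i-1} \le 2\ell$; the block containing time $n$ similarly supplies a cut time $\le T_{(n)}$, giving $n - T_{(n)} \le \ell$. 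Summing, the numerator in the displayed expression is bounded by $3\ell = (3\epsilon/2)\,n(\log n)^{-7/12}$, and the arbitrariness of $\epsilon$ yields the convergence in probability.

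To implement the union bound, I need a uniform estimate of the form
\[\mathbf{P}\bigl(\mathcal{T} \cap [a, a+\ell] = \emptyset\bigr) \le \frac{C\log\log n}{\log n}, \qquad \ell \le a \le n,\]
together with a separate $o(1)$ bound $\mathbf{P}(T_1 > \ell) \to 0$ for the leftmost block $I_0$, which is immediate since $T_1 < \infty$ $\mathbf{P}$-a.s.\ by \cite{cut} and $\ell \to \infty$. Given these, summing over the $K$ blocks gives
\[\mathbf{P}\bigl(\exists\, j \colon I_j \cap \mathcal{T} = \emptyset\bigr) \le o(1) + O\!\left(\frac{(\log n)^{7/12}\log\log n}{\log n}\right) = O\!\left(\frac{\log\log n}{(\log n)^{5/12}}\right) \to 0,\]
as required.

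The uniform estimate itself should reduce to the bound \eqref{law-old} already invoked above: for $\ell \le a \le n$ one has $\log a \sim \log n$ and $a(\log a)^{-6} \ll \ell$ by a direct comparison of exponents, so $[a, a + a(\log a)^{-6}] \subseteq [a, a+\ell]$ and monotonicity of the empty-interval probability in the interval length yields
\[\mathbf{P}\bigl(\mathcal{T} \cap [a, a+\ell] = \emptyset\bigr) \le \mathbf{P}\bigl(\mathcal{T} \cap [a, a + a(\log a)^{-6}] = \emptyset\bigr),\]
whose right-hand side is $\mathbf{P}(F_a)$ in the notation introduced just before \eqref{law-old}. The main technical obstacle is thus to verify that the bound $\mathbf{P}(F_n) \le C\log\log n/\log n$ from \cite[Lemma 7.7.4]{Lawb} extends verbatim to an arbitrary base scale $a$, giving $\mathbf{P}(F_a) \le C\log\log a/\log a \le C'\log\log n/\log n$; since the scale-dependence of the underlying argument enters only via the cut-time density $\mathbf{P}(k\in\mathcal{T})\asymp(\log k)^{-1/2}$ and routine covariance estimates, I expect this to be a matter of chasing constants rather than a genuine new difficulty.
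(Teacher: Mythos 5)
Your approach mirrors the paper's: partition $[0,n]$ into polylogarithmically many blocks, apply the Lawler gap estimate \cite[Lemma~7.7.4]{Lawb} (in the form of \eqref{law-old}) to each block, and union bound; the paper uses block scale $u_n = n(\log n)^{-2/3}$ and requires a cut time within $r_n = n(\log n)^{-6}$ of \emph{both} endpoints of each block, whereas you work directly at the target scale $\epsilon n(\log n)^{-7/12}$ with just one cut time per block, which is slightly cleaner and gives exactly what the lemma asks. Your use of the estimate at a general base $a$ with $\log a \sim \log n$ is the same implicit uniformity the paper relies on, so no new difficulty arises there. One small slip to patch: the cut time you get from the block containing $n$ could itself exceed $n$ (if the last block overhangs $n$), so it is not automatically $\le T_{(n)}$; instead use the cut time from the penultimate block, which is $\le n$ by construction, settling for $n - T_{(n)} \le 2\ell$, and the numerator is then bounded by $4\ell$ rather than $3\ell$---still of the required order.
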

\begin{proof}
We set $u_{n} = n (\log n)^{-\frac{2}{3}}$, $r_{n} = n (\log n)^{-6}$ and $N = \lfloor \frac{n}{u_{n}} \rfloor$. For each $1 \le i \le N+1$, we define the event $Z_{i}$ by
\[Z_{i} = \left\{ {\cal T} \cap [(i-1) u_{n}, (i-1)u_{n} + r_{n} ] \neq \emptyset, \: {\cal T} \cap [i u_{n} - r_{n}, i u_{n} ] \neq \emptyset \right\},\]
where ${\cal T}$ is the set of cut times, as defined at \eqref{cuttimes}. It then follows from \cite[Lemma 7.7.4]{Lawb} that
\[\mathbf{P} (Z_{i} ) \ge 1 - \frac{C \log \log n}{\log n}\]
for all $1 \le i \le N+1$. Thus, since $N \le C (\log n)^{\frac{2}{3}}$, we have that
\[\mathbf{P} \left( \bigcap_{i=1}^{N+1} Z_{i} \right) \ge 1 - \frac{C \log \log n}{(\log n)^{\frac{1}{3}}}.\]
Clearly, on the event $\cap_{i=1}^{N+1} Z_{i}$, it holds that $\max_{i\in {\cal I}_n}(T_{i}-T_{i-1})\leq u_n$ and also $n-T_{(n)}\leq u_n$, and so the result follows.
\end{proof}

The above lemma readily allows us to compare the effective resistance process $R_\mathcal{G}(0,S_{n})$ with its past maximum, $\max_{m\leq n}R_\mathcal{G}(0,S_{n})$, and similarly for the graph distance.

\begin{lem}\label{request}
It holds that, as $n \to \infty$,
\[\frac{\max_{m\leq n}\left|\max_{k\leq m}R_\mathcal{G}(0,S_k)-R_\mathcal{G}(0,S_m)\right|}{n(\log n)^{-7/12}}\rightarrow 0\]
in $\mathbf{P}$-probability. The same claim also holds when $R_\mathcal{G}$ is replaced by $d_\mathcal{G}$.
\end{lem}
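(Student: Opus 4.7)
The plan is to exploit the fact that the effective resistance (respectively, graph distance) is non-decreasing along the subsequence of cut times, so the difference between the running maximum and the current value can only be large when we are currently ``inside'' a long cut-time gap. First I will set $f(k) := R_\mathcal{G}(0,S_k)$ and observe that, whenever $T$ is a cut time, the vertex $S_T$ separates $0$ from every $S_k$ with $k > T$ in $\mathcal{G}$ (by the very definition \eqref{cuttimes}). The series law for effective resistance across a cut vertex then yields $f(k) = f(T) + R_\mathcal{G}(S_T, S_k)$ for all $k \geq T$, from which it follows that $f(T_i) \leq f(T_j)$ whenever $T_i \leq T_j$ are both cut times, and that $f(k) \geq f(T)$ for any cut time $T \leq k$.

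Given this monotonicity, I would next write, for $k \leq m \leq n$, $T_{(k)}$ (respectively, $T_{(m)}$) for the largest cut time not exceeding $k$ (respectively, $m$), with the convention $T_{(k)} := 0$ if $k < T_1$. Since $T_{(k)} \leq T_{(m)}$ and therefore $f(T_{(k)}) \leq f(T_{(m)}) \leq f(m)$, the additivity at cut times gives
\[
f(k) - f(m) \leq f(k) - f(T_{(k)}) = R_\mathcal{G}\bigl(S_{T_{(k)}},S_k\bigr) \leq d_\mathcal{G}\bigl(S_{T_{(k)}},S_k\bigr) \leq k - T_{(k)},
\]
where the final inequality uses the obvious path $(S_j)_{T_{(k)} \leq j \leq k}$. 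Taking the supremum over $k \leq m$ and then over $m \leq n$ therefore bounds the quantity of interest by $\max_{k \leq n}(k - T_{(k)})$.

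It then remains only to match this against Lemma \ref{tgap}. Any $k \leq n$ lies either in the initial segment $[0,T_1)$, in some completed cut-time interval $[T_{i-1},T_i]$ with $i \in \mathcal{I}_n$, or in the trailing piece $[T_{(n)},n]$, so
\[
\max_{k \leq n}\bigl(k - T_{(k)}\bigr) \leq T_1 + \max_{i \in \mathcal{I}_n}\bigl(T_i - T_{i-1}\bigr) + \bigl(n - T_{(n)}\bigr).
\]
The last two terms are $o(n(\log n)^{-7/12})$ in $\mathbf{P}$-probability by Lemma \ref{tgap}, while $T_1$ is $\mathbf{P}$-a.s.\ finite and hence also negligible at this scale. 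The argument for $d_\mathcal{G}$ is identical, the only ingredient to replace being the series law for effective resistance, which in the graph-distance setting is the elementary observation that any path in $\mathcal{G}$ from $0$ to $S_k$ must traverse the cut vertex $S_T$, so $d_\mathcal{G}(0, S_k) = d_\mathcal{G}(0, S_T) + d_\mathcal{G}(S_T, S_k)$. I do not expect any serious obstacle here; the main point to get right is the additivity at cut times, and that follows transparently from the cut-vertex structure imposed by \eqref{cuttimes}.
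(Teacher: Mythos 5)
Your argument is correct and follows essentially the same route as the paper: exploit the series law at cut vertices to reduce the running-maximum gap to the length of a cut-time gap, and then apply Lemma \ref{tgap} together with the a.s.\ finiteness of $T_1$. If anything, your formulation via $f(k)-f(m)\leq f(k)-f(T_{(k)})\leq k-T_{(k)}$ is a cleaner way to organise the same bound (the paper instead works with the cut-time interval containing the outer index $m$), but the underlying ideas are identical.
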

\begin{proof} If $m\leq T_1$, it holds that
\[\left|\max_{k\leq m}R_\mathcal{G}(0,S_k)-R_\mathcal{G}(0,S_m)\right|\leq T_1.\]
If $m\in[T_{i-1},T_i]$ for some $i\in\mathcal{I}_n$, where $\mathcal{I}_n$ is defined as in the statement of Lemma \ref{tgap}, then
\[\left|\max_{k\leq m}R_\mathcal{G}(0,S_k)-R_\mathcal{G}(0,S_m)\right|=\left|\max_{k\in[T_{i-1},m]}R_\mathcal{G}(S_{T_{i-1}},S_k)-R_\mathcal{G}(S_{T_{i-1}},S_m)\right|\leq T_i-T_{i-1}.\]
If $m\in [T_{(n)},n]$, then
\[\left|\max_{k\leq m}R_\mathcal{G}(0,S_k)-R_\mathcal{G}(0,S_m)\right|=\left|\max_{k\in[T_{(n)},n]}R_\mathcal{G}(S_{T_{(n)}},S_k)-R_\mathcal{G}(S_{T_{(n)}},S_n)\right|\leq n-T_{(n)}.\]
We thus obtain the first claim of the lemma by applying Lemma \ref{tgap} (and the fact that $T_1$ is $\mathbf{P}$-a.s.\ finite). The second assertion can be proved similarly.
\end{proof}

\begin{proof}[Proof of Theorem \ref{main2}(b),(c)] From Lemmas \ref{rlem} and \ref{request}, we find that
\[\frac{\max_{m\leq n}R_\mathcal{G}(0,S_{m})}{n\tilde{\psi}(n)}\rightarrow 1\]
in $\mathbf{P}$-probability. Since $\max_{m\leq n}R_\mathcal{G}(0,S_{m})$ is monotone, it follows by an elementary argument that
\[\left(\frac{\max_{m\leq nt}R_\mathcal{G}(0,S_{m})}{n\tilde{\psi}(n)}\right)_{t\in[0,T]}\rightarrow \left(t\right)_{t\in[0,T]}\]
in $\mathbf{P}$-probability. Combining this with Lemma \ref{request}, we obtain Theorem \ref{main2}(b). The proof of Theorem \ref{main2}(c) is essentially the same.
\end{proof}

\subsection{Cut-time scaling}

Recall the definition of a cut time from \eqref{cuttimes} and the random variables $(I_{k})_{k\geq 0}$ from \eqref{ikdef}. Moreover, define
\[N_{n} = \sum_{k=0}^{n} I_{k}\]
to be the number of cut times in $[0, n]$. On \cite[page 3]{cut}, it is stated that there exists a deterministic constant $\alpha\in(0,\infty)$ such that, in $\mathbf{P}$-probability, as $n \to \infty$,
\begin{equation}\label{no-proof}
\frac{N_{n}}{n (\log n)^{-\frac{1}{2}}} \to \alpha.
\end{equation}
From this and the monotonicity of the sequence $(T_n)_{n\geq 1}$, Theorem \ref{main2}(d) readily follows with $\tau=\alpha^{-1}$. As we noted in Remark \ref{back}, it is claimed in \cite{cut} that the limit at \eqref{no-proof} can be proved by using the methods developed in \cite[Chapter 7]{Lawb}. However, for the sake of the reader, we will give a proof of \eqref{no-proof} here.

\begin{proof}[Proof of Theorem \ref{main2}(d)]
As noted above, it will suffice to prove \eqref{no-proof}. Let $S^{1} = ( S^{1}_{k} )_{k \ge 0} $ and $S^{2} = ( S^{2}_{k} )_{k \ge 0} $ be two independent simple random walks on $\mathbb{Z}^{4}$ started at the origin. We write $\xi_{n}^{1}$ for the first time that $S^{1}$ exits $\{ x \in \mathbb{Z}^{4} :\:|x| \le n \}$, that is, a ball of radius $n$ with respect to the Euclidean distance centered at the origin. It is then proved in \cite[Theorem 5.2]{SR} that there exists a constant $\beta \in(0,\infty)$ such that
\begin{equation}\label{law-sr}
\lim_{n \to \infty} (\log n )^{\frac{1}{2}}   \mathbf{P} \left( S^{1}_{[1, \xi_{n}^{1} ]} \cap S^{2}_{[0, \infty )} = \emptyset \right) = \beta,
\end{equation}
where we let $S^{1}_{[a, b]} = \{ S^{1}_{k} :\: a \le k \le b \}$ for $0 \le a \le b < \infty$, and $S^{2}_{[0, \infty)} = \{ S^{2}_{k}:\: k \ge 0 \}$.
Moreover, it is known that
\begin{equation}\label{large-dev}
\mathbf{P} \left( n^{2} (\log n)^{-1} \le \xi_{n}^{1} \le n^{2} \log n \right) \ge 1 - C n^{-c},
\end{equation}
for some constants $c, C \in (0, \infty)$, see \cite[Proposition 2.4.5]{LawLim}. Combining \eqref{law-sr} and \eqref{large-dev}, we see that
\begin{align*}
\mathbf{P} \left( S^{1}_{[1, n^{2} \log n ]} \cap S^{2}_{[0, \infty )} = \emptyset \right) &\le \mathbf{P} \left( S^{1}_{[1, \xi_{n}^{1} ]} \cap S^{2}_{[0, \infty )} = \emptyset \right) + C n^{-c}  = \left( \beta + o (1) \right) (\log n)^{-\frac{1}{2}},  \\
\mathbf{P} \left( S^{1}_{[1, n^{2} (\log n)^{-1} ]} \cap S^{2}_{[0, \infty )} = \emptyset \right) &\ge \mathbf{P} \left( S^{1}_{[1, \xi_{n}^{1} ]} \cap S^{2}_{[0, \infty )} = \emptyset \right) - C n^{-c} = \left( \beta + o (1) \right) (\log n)^{-\frac{1}{2}}.
\end{align*}
Reparameterising and letting $\alpha = \sqrt{2} \beta$, we find that
\[\lim_{n \to \infty} (\log n)^{\frac{1}{2}} \mathbf{P} \left( S^{1}_{[1, n ]} \cap S^{2}_{[0, \infty )} = \emptyset \right) = \alpha,\]
and thus that
\begin{equation}\label{exponent-int}
\lim_{n \to \infty} (\log n)^{\frac{1}{2}} \mathbf{E} (I_{n} )= \alpha.
\end{equation}
Consequently, it follows that
\begin{equation}\label{nnas}
\lim_{n \to \infty} \frac{\mathbf{E}(N_{n})}{n (\log n)^{-\frac{1}{2}}} = \alpha.
\end{equation}
Thus to complete the proof, it will be enough to establish a suitable bound on the variance of $N_n$ for large $n$.

Take $n_{0} \ge 1$ so that: for all $n_{0} \le m \le n$,
\begin{equation}\label{large-n}
b_{m} \le b_{n},
\end{equation}
where $b_{n} = n (\log n)^{-9}$. Throughout the proof here, we will assume $b_n\ge n_{0}$. For $b_{n} \le k \le n$, we set
\[I_{k,n} = {\bf 1}_{\{ S_{[k- b_{n}, k]} \cap S_{[k+1, k+ b_{n} ]} = \emptyset \}}.\]
Also, we write
\begin{align*}
F_{k}& := \{ k \text{ is a cut time} \}\equiv\{I_k=1\},\qquad\\
F_{k, n}& :=  \left\{ S_{[k- b_{n}, k]} \cap S_{[k+1, k+ b_{n} ]} = \emptyset \right\}\equiv\{I_{k,n}=1\}.
\end{align*}
We note that $F_{k,n}$ and $F_{l,n}$ are independent if $b_{n} \le k < l $ and $l-k > 2 b_{n}$. In order to estimate the variance of $N_{n}$, we consider $b_{n} \le k < l \le n $ with $l-k > 2 b_{n}$. Noting that $ F_{k} \cap F_{l}\subseteq F_{k,n} \cap F_{l, n}$, we have that
\begin{align*}
\left| \mathbf{E} (I_{k} I_{l} ) - \mathbf{E} (I_{k, n} I_{l,n} ) \right|& = \mathbf{P}( F_{k,n} \cap F_{l, n} ) - \mathbf{P} ( F_{k} \cap F_{l} )  \\
&\le \mathbf{P}( F_{k,n} \cap F_{l, n}  \cap F_{k}^{c} ) +   \mathbf{P}( F_{k,n} \cap F_{l, n}  \cap F_{l}^{c} )\\
&\le \mathbf{P}( F_{k,n}  \cap F_{k}^{c} ) +   \mathbf{P}( F_{l, n}  \cap F_{l}^{c} ).
\end{align*}
We will estimate the probabilities on the right-hand side. It is shown in  \cite[Lemma 7.7.3]{Lawb} that
\[ \mathbf{P} ( F_{n, n} ) =  \mathbf{P}( F_{n} ) \left( 1 + O \left( \frac{\log \log n}{ \log n} \right) \right).\]
Since $ n_{0} \le b_{n} \le k \le n$, we have $b_{k} \le b_{n}$ by \eqref{large-n}. From this, it follows that $F_{k} \subseteq F_{k, n} \subseteq F_{k,k}$.
Thus we have
\begin{equation}\label{atode}
\mathbf{P} ( F_{k} ) \le \mathbf{P} (F_{k, n} ) \le \mathbf{P} (F_{k,k})  =  \mathbf{P}( F_{k} ) \left( 1 + O \left( \frac{\log \log k}{ \log k} \right) \right) =  \mathbf{P}( F_{k} ) \left( 1 + O \left( \frac{\log \log n}{ \log n} \right) \right),
\end{equation}
which implies in turn that
\[\mathbf{P}( F_{k,n}  \cap F_{k}^{c} ) =  \mathbf{P} (F_{k, n} ) - \mathbf{P} ( F_{k} ) = \mathbf{P} ( F_{k} ) O \left( \frac{\log \log n}{ \log n} \right).\]
Moreover, by \eqref{exponent-int}, it holds that $\mathbf{P} ( F_{k} ) \le C (\log n )^{-\frac{1}{2}}$, and so
\[\mathbf{P}( F_{k,n}  \cap F_{k}^{c} ) = O \left( \log \log n / (\log n)^{\frac{3}{2}} \right).\]
A similar estimate applies to $\mathbf{P}( F_{l,n}  \cap F_{l}^{c} )$, and thus we conclude that
\[\left| \mathbf{E}(I_{k} I_{l} ) - \mathbf{E}(I_{k, n} I_{l,n} ) \right| \le C ( \log n)^{- \frac{3}{2}} (\log \log n),\]
for $b_{n} \le k < l \le n $ with $l-k  > 2 b_{n}$.

We are now in a position to complete the proof. It follows from \eqref{atode} that, for $b_{n} \le k \le n$,
\[\mathbf{E}(I_{k, n} ) = \mathbf{E}(I_{k} ) \left( 1 + O \left( \frac{\log \log n}{\log n} \right) \right).\]
Also, we recall that $I_{k,n}$ and $I_{l, n}$ are independent for $b_{n} \le k, l \le n$ with $|l-k| > 2 b_{n}$.
Consequently, writing $\sum_{\ast}$ for the sum over integers $k$ and $l$ satisfying $b_{n} \le k, l \le n$ with $|l-k| > 2 b_{n}$,
\begin{align*}
\text{Var} (N_{n} )& = \sum_{0 \le k, l \le n} \left( \mathbf{E}(I_{k} I_{l} ) - \mathbf{E}(I_{k} ) \mathbf{E}(I_{l} ) \right)  \\
&= \sum\nolimits_{\ast} \left( \mathbf{E}(I_{k} I_{l} ) - \mathbf{E}(I_{k} ) \mathbf{E}(I_{l} ) \right) + O (n b_{n})  \\
&= \sum\nolimits_{\ast} \left( \mathbf{E}(I_{k, n} I_{l, n} ) - \mathbf{E}(I_{k} ) \mathbf{E}(I_{l} ) \right) + O \left( n^{2}  ( \log n)^{- \frac{3}{2}} (\log \log n) \right) \\
&= \sum\nolimits_{\ast}  \mathbf{E}(I_{k} ) \mathbf{E}(I_{l} ) O \left( \frac{\log \log n}{\log n} \right)  + O \left( n^{2}  ( \log n)^{- \frac{3}{2}} (\log \log n) \right) \\
&= \mathbf{E}( N_{n} )^{2} O \left(   ( \log n)^{- \frac{1}{2}} (\log \log n) \right),
\end{align*}
where for the final line we recall \eqref{nnas}. This implies that $N_{n} / \mathbf{E}( N_{n} )$ converges to $1$ in $\mathbf{P}$-probability as $n \to \infty$. Combining this with \eqref{nnas}, we get the limit at \eqref{no-proof}, as required.
\end{proof}

\section{Simple random walk scaling limit}\label{sec3}

As briefly outlined in the introduction, the way in which we will establish the random walk scaling limits of Theorem \ref{main1} is via the resistance form approach developed in \cite{Croyres,CHK}. In particular, we will apply \cite[Theorem 7.2]{Croyres}, which requires us to check a certain convergence result for compact metric spaces equipped with measures, marked points and continuous maps (see Proposition \ref{p1} below), as well as a non-explosion condition (see Proposition \ref{p2}). To this end, we start by  introducing the framework that will enable us to check the conditions of \cite[Theorem 7.2]{Croyres}. Once these have been verified, we recall the necessary resistance form background for deriving our main conclusions for the random walk on the range of random walk in four dimensions that appear as Theorem \ref{main1}. To conclude the section, we prove Corollary \ref{srwcor}.

Broadly following the notation of \cite{Croyres}, we consider a space $\mathbb{K}^*_c$ of elements of the form $(K, d_K, \mu, \rho,f)$, where we assume: $(K, d_K)$ is a compact metric space, $\mu$ is a finite Borel measure on $K$, $\rho$ is a marked point of $K$, and $f$ is a continuous map from $K$ into some image space $(M,d_M)$, which is a fixed complete, separable metric space. (When proving Theorem \ref{main1}(a), we will take $(M,d_M)$ to be $\mathbb{R}$ equipped with the Euclidean distance, and when proving Theorem \ref{main1}(b), we will take $(M,d_M)$ to be $\mathbb{R}^4$, again equipped with the Euclidean distance.) For two elements of $\mathbb{K}^*_c$, we define the distance between them $\Delta^*_c((K, d_K, \mu, \rho, f), (K', d_{K'}, \mu', \rho', f'))$ to be equal to
\begin{equation}\label{deltacdef}
\inf_{\substack{(Z,d_Z),\pi,\pi',\mathcal{C}:\\(\rho,\rho')\in\mathcal{C}}}\left\{d_Z^P\left(\mu\circ\pi^{-1},\mu'\circ{\pi'}^{-1}\right)+ \sup_{(x,x')\in\mathcal{C}}\left(d_Z(\pi(x),\pi'(x'))+d_M(f(x),f'(x'))\right)\right\},
\end{equation}
where the infimum is taken over all metric spaces $(Z, d_Z)$, isometric embeddings  $\pi: (K, d_K)\rightarrow (Z,d_Z)$, $\pi': (K', d_{K'}) \rightarrow(Z,d_Z)$, and correspondences $\mathcal{C}$ between $K$ and $K'$. Note that, by a correspondence $\mathcal{C}$ between $K$ and $K'$, we mean a subset of $K\times  K'$ such that for every $x\in K$ there exists at least one $x'\in K'$ such that $(x, x')\in\mathcal{C}$, and conversely, for every $x' \in K'$ there exists at least one $x\in K$ such that $(x, x')\in\mathcal{C}$. To make sense of the expression at \eqref{deltacdef}, we further define $d_Z^P$ to be the Prohorov distance between finite Borel measures on $(Z,d_Z)$. We note that, up to trivial equivalences, it is possible to check that $(\mathbb{K}^*_c,\Delta^*_c)$ is a separable metric space.

For checking Theorem \ref{main1}(a), the (random) elements of $\mathbb{K}^*_c$ that will be of interest to us will be bounded restrictions of
\[\mathcal{X}_n:=\left(V(\mathcal{G}),\frac{R_\mathcal{G}}{n\tilde{\psi}(n)},\frac{\mu_\mathcal{G}}{\lambda n},0,\frac{d_\mathcal{G}(0,\cdot)}{n{\phi}(n)}\right),\]
where $\lambda$, $\tilde{\psi}$ and $\phi$ are given by Theorem \ref{main2}, and
\[\mathcal{X}:=\left([0,\infty),d_E,\mathcal{L},0,{I}\right),\]
where $d_E$ is the Euclidean distance on $[0,\infty)$, $\mathcal{L}$ is Lebesgue measure on $[0,\infty)$, and $I$ is the identity map on $[0,\infty)$. In particular, for $r\in(0,\infty)$, let
\[\mathcal{X}^{(r)}_n:=\left(B_\mathcal{G}(0,rn\tilde{\psi}(n)),\frac{R_\mathcal{G}}{n\tilde{\psi}(n)},\frac{\mu_\mathcal{G}}{\lambda n},0,\frac{d_\mathcal{G}(0,\cdot)}{n{\phi}(n)}\right),\]
where $B_\mathcal{G}(x,r):=\{y\in V(\mathcal{G}):\:R_\mathcal{G}(x,y)< r\}$ is the (open) resistance ball in $\mathcal{G}$ with centre $x$ and radius $r$, and
\[\mathcal{X}^{(r)}:=\left([0,r],d_E^{(r)},\mathcal{L}^{(r)},0,{I}^{(r)}\right),\]
where $d_E^{(r)}$, $\mathcal{L}^{(r)}$ and ${I}^{(r)}$ are the natural restrictions of $d_E$, $\mathcal{L}$ and $I$ to $[0,r]$. Similarly, for checking Theorem \ref{main2}(b), we introduce
\[\tilde{\mathcal{X}}_n:=\left(V(\mathcal{G}),\frac{R_\mathcal{G}}{n\tilde{\psi}(n)},\frac{\mu_\mathcal{G}}{\lambda n},0,\frac{I_{\mathcal{G}}}{n^{1/2}}\right),\]
where $I_{\mathcal{G}}$ is the identity map on $\mathbb{R}^4$ restricted to $V(\mathcal{G})$, and
\[\tilde{\mathcal{X}}:=\left([0,\infty),d_E,\mathcal{L},0,W_{\cdot}\right),\]
where $W$ is standard Brownian motion on $\mathbb{R}^4$. The bounded restrictions of these are then given by
\[\tilde{\mathcal{X}}^{(r)}_n:=\left(B_\mathcal{G}(0,rn\tilde{\psi}(n)),\frac{R_\mathcal{G}}{n\tilde{\psi}(n)},\frac{\mu_\mathcal{G}}{\lambda n},0,\frac{I_{n}^{(r)}}{n^{1/2}}\right),\]
where $I_{n}^{(r)}$ is the identity map on $\mathbb{R}^4$ restricted to $B_\mathcal{G}(0,rn\tilde{\psi}(n))$, and
\[\tilde{\mathcal{X}}^{(r)}:=\left([0,r],d_E^{(r)},\mathcal{L}^{(r)},0,W_{\cdot}^{(r)}\right),\]
where $W^{(r)}:[0,r]\rightarrow\mathbb{R}^4$ is the continuous map given by $x\mapsto W_x$ on the relevant interval. Note that, since $R_\mathcal{G}(0,S_{T_m})\geq m-1$ and $T_m<\infty$ for all $m$, the set $B_\mathcal{G}(0,rn\tilde{\psi}(n))$ is finite, and therefore $R_\mathcal{G}$-compact, $\mathbf{P}$-a.s. In particular, both $\mathcal{X}^{(r)}_n$ and $\tilde{\mathcal{X}}^{(r)}_n$ are elements of $\mathbb{K}_c^*$, $\mathbf{P}$-a.s. It is clear that $\mathcal{X}^{(r)}$ and $\tilde{\mathcal{X}}^{(r)}$ are as well. The first goal of this section is to prove the following. We note that the proof is similar to that of \cite[Theorem 4.1]{CFJ}, which demonstrated a corresponding convergence result for the resistance metric spaces associated with the one-dimensional Mott random walk.

\begin{prop}\label{p1} (a) For every $r\in(0,\infty)$, as $n\rightarrow\infty$,
\[\mathcal{X}_{n}^{(r)}\rightarrow \mathcal{X}^{(r)}\]
in $\mathbf{P}$-probability in the space $(\mathbb{K}^*_c,\Delta^*_c)$, where $(M,d_M)$ is $\mathbb{R}$ equipped with the Euclidean distance.\\
(b) For every $r\in(0,\infty)$, as $n\rightarrow\infty$,
\[\tilde{\mathcal{X}}_{n}^{(r)}\rightarrow \tilde{\mathcal{X}}^{(r)}\]
in distribution in the space $(\mathbb{K}^*_c,\Delta^*_c)$, where $(M,d_M)$ is $\mathbb{R}^4$ equipped with the Euclidean distance.
\end{prop}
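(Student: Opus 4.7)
The plan is to construct, for each $n$, an explicit correspondence $\mathcal{C}_n$ between $B_\mathcal{G}(0,rn\tilde\psi(n))$ and $[0,r]$ indexed by the cut times of $S$, and then to bound each of the three quantities appearing in the definition of $\Delta^*_c$ at \eqref{deltacdef}: the distortion of the correspondence (viewed with its natural isometric embedding into a Kuratowski-type gluing), the Prokhorov distance between pushforward measures, and the distance between paired function values in $M$. Writing $T_0:=0$, set
\[
s_i^n := \frac{R_\mathcal{G}(0,S_{T_i})}{n\tilde\psi(n)}.
\]
These points form an increasing sequence, because every $S_{T_i}$ is a cut vertex of $\mathcal{G}$ and hence $R_\mathcal{G}$ is additive along the skeleton $\{S_{T_i}\}_{i\ge 0}$, as in the proof of Lemma \ref{rlem}. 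Let $i^\ast_n$ be the largest index with $s_i^n\le r$. Then $\mathcal{C}_n$ pairs each $S_k$ with $T_{i-1}\le k<T_i$ and $1\le i\le i^\ast_n$ with $s_{i-1}^n$, each $S_{T_i}$ with $s_i^n$, and each $s\in[0,r]$ with $S_{T_i}$ for $i$ the largest with $s_i^n\le s$. A small amount of boundary bookkeeping for vertices in the last (possibly incomplete) loop is needed but causes no difficulty; the marked-point pairing $(0,0)=(S_{T_0},s_0^n)$ is automatic.

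For the distortion, take paired $(S_k,s_{i-1}^n),(S_l,s_{j-1}^n)$ with $k\le l$ (hence $i\le j$). Cut-vertex additivity gives $|s_{j-1}^n-s_{i-1}^n|=R_\mathcal{G}(S_{T_{i-1}},S_{T_{j-1}})/(n\tilde\psi(n))$, and since effective resistance never exceeds graph distance (which is itself bounded by the number of steps separating two points along $S$), the triangle inequality gives
\[
\left|R_\mathcal{G}(S_k,S_l)-R_\mathcal{G}(S_{T_{i-1}},S_{T_{j-1}})\right|\le (T_i-T_{i-1})+(T_j-T_{j-1})\le 2\max_{1\le \ell\le i^\ast_n+1}(T_\ell-T_{\ell-1}).
\]
Lemma \ref{tgap} bounds this by $o(n(\log n)^{-7/12})=o(n\tilde\psi(n))$ in $\mathbf{P}$-probability, so the distortion vanishes uniformly on $\mathcal{C}_n$; the same estimate gives $s_i^n-s_{i-1}^n=o(1)$ uniformly, so the ladder $\{s_i^n\}$ is asymptotically dense in $[0,r]$.

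For the measure, let $\nu_n$ be the pushforward of $\mu_\mathcal{G}/(\lambda n)$ along $\mathcal{C}_n$, which places mass $\mu_\mathcal{G}(S_{[T_{i-1},T_i-1]})/(\lambda n)$ at $s_{i-1}^n$. The cumulative distribution function of $\nu_n$ evaluated at $s_i^n$ equals $\mu_\mathcal{G}(S_{[0,T_i-1]})/(\lambda n)$; applying Theorem \ref{main2}(a) together with Theorem \ref{main2}(b) (which, via the slow variation of $\tilde\psi$, forces $s_i^n=T_i/n+o(1)$ uniformly in $i\le i^\ast_n$) one sees that this CDF converges uniformly on $[0,r]$ to the identity, so $\nu_n\to\mathcal{L}^{(r)}$ weakly, and in particular in the Prokhorov metric, in $\mathbf{P}$-probability. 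The function convergence in part (a), namely $|d_\mathcal{G}(0,S_k)/(n\phi(n))-s_{i-1}^n|\to 0$ uniformly on $\mathcal{C}_n$, follows by the same loop-diameter argument $|d_\mathcal{G}(0,S_k)-d_\mathcal{G}(0,S_{T_{i-1}})|\le T_i-T_{i-1}$ together with Theorem \ref{main2}(c) and slow variation of $\phi$ and $\tilde\psi$.

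Part (b) requires one additional input, the joint invariance principle for Brownian motion on $\mathbb{R}^4$. Since the convergences of Theorem \ref{main2} are all in probability and expressed in terms of the single path $S$, they couple automatically with Donsker's theorem $(S_{\lfloor nt\rfloor}/\sqrt n)_{t\ge 0}\Rightarrow(W_t)_{t\ge 0}$; passing to a Skorokhod coupling I may assume uniform almost-sure convergence on compacts. The paired function bound then decomposes as
\[
\left|\frac{S_k}{\sqrt n}-W_{s_{i-1}^n}\right|\le\left|\frac{S_k}{\sqrt n}-W_{k/n}\right|+\left|W_{k/n}-W_{T_{i-1}/n}\right|+\left|W_{T_{i-1}/n}-W_{s_{i-1}^n}\right|,
\]
and each piece is $o(1)$ uniformly on $\mathcal{C}_n$: the first by Donsker, the second by the cut-time gap estimate combined with uniform continuity of $W$ on $[0,r]$, and the third because $s_{i-1}^n-T_{i-1}/n\to 0$ by Theorem \ref{main2}(b) and slow variation of $\tilde\psi$. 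The main obstacle I anticipate is not any single estimate but rather tracking the various uniformity requirements across the correspondence, and in particular reconciling the resistance-indexed parameter $s_{i-1}^n$ with the time-indexed parameter $T_{i-1}/n$ in the presence of the slowly-varying correction factors.
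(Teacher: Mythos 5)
Your proposal follows essentially the same route as the paper: the skeleton is the sequence of cut times, resistance is additive across cut vertices, Lemma \ref{tgap} controls the loop diameters, Theorem \ref{main2}(a)--(d) provides the volume/resistance/distance/cut-time scaling, and a Skorohod coupling of $S$ with $W$ handles the $\mathbb{R}^4$-valued map in part (b). The only organizational difference is that the paper splits the approximation into two steps, first replacing $\mathcal{X}_n^{(r)}$ by the finite linear space $\mathcal{Y}_n^{(r)}$ supported on $\{x_0,\dots,x_{R(n,r)}\}\subseteq[0,r]$ and then comparing $\mathcal{Y}_n^{(r)}$ to $\mathcal{X}^{(r)}$, each step with an explicit isometric embedding into one of the two ambient spaces; you instead construct one direct correspondence between $B_{\mathcal{G}}(0,rn\tilde\psi(n))$ and $[0,r]$ and appeal to the standard distortion-to-gluing argument to produce the space $(Z,d_Z)$ demanded by \eqref{deltacdef}. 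Both are valid; the paper's version is more explicit, yours is slightly shorter and matches how one would argue for Gromov--Hausdorff--type metrics in general.

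One small technical point worth fixing. You define $\nu_n$ by placing mass $\mu_\mathcal{G}(S_{[T_{i-1},T_i-1]})/(\lambda n)$ at $s_{i-1}^n$ and then assert that the CDF at $s_i^n$ equals $\mu_\mathcal{G}(S_{[0,T_i-1]})/(\lambda n)$. But the vertex sets $S_{[T_{i-1},T_i-1]}$ for consecutive $i$ can overlap at cut vertices (if $S_{T_i}$ was already visited during $[T_{i-1},T_i-1]$), so your sum can exceed the claimed CDF value, and moreover the formula then is not literally the pushforward of $\mu_\mathcal{G}/(\lambda n)$ under a map. The discrepancy is at most $8R(n,r)/(\lambda n)=O((\log n)^{-1/2})\to 0$, so it does not affect the limit, but the cleaner choice (used in the paper) is to take the loops $\{S_{T_{i-1}+1},\dots,S_{T_i}\}$ for $i\ge 2$ and $\{S_0,\dots,S_{T_1}\}$ for $i=1$: these are pairwise disjoint by the cut-time property and partition $S_{[0,T_{R(n,r)}]}$ exactly. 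Likewise, you should note (as the paper does via \eqref{e0} and Theorem \ref{main2}(d)) that the range $1\le\ell\le i_n^*+1$ lies inside $\mathcal{I}_{cn}$ for some fixed $c>r$ with probability tending to one, so that Lemma \ref{tgap} actually applies to the maximum gap you invoke.
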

\begin{proof} First, define
\begin{equation}\label{rnrdef}
R(n,r):=\sum_{i=1}^\infty \mathbf{1}_{\{R_\mathcal{G}(0,S_{T_i})< rn\tilde{\psi}(n)\}}
\end{equation}
to be the number of cut points that fall into $B_\mathcal{G}(0,rn\tilde{\psi}(n))$, and set
\[V_n^{(r)}:=\{x_0,x_1,\dots,x_{R(n,r)}\},\]
where $x_0:=0$ and
\[x_i:=\frac{R_\mathcal{G}\left(0,S_{T_i}\right)}{n\tilde{\psi}(n)}.\]
Note that, by construction $x_0\leq x_1<x_2<\dots< x_{R(n,r)}$ (with equality between $x_0$ and $x_1$ if and only if $T_1=0$). We will approximate $\mathcal{X}_{n}^{(r)}$ by
\[\mathcal{Y}_{n}^{(r)}:=\left(V_n^{(r)},d_E|_{V_n^{(r)}},\mu_n^{(r)},0,I|_{V_{n}^{(r)}}\right),\]
where $\mu_n^{(r)}$ is a measure on $V_n^{(r)}$ given by
\[\mu_n^{(r)}(\{x_i\})=\frac{\mu_\mathcal{G}\left(\{S_{T_{i-1}+1},\dots,S_{T_i}\}\right)}{\lambda n}\]
for $i\geq 2$, $\mu_n^{(r)}(\{x_1\})={\mu_\mathcal{G}(\{S_{0},\dots,S_{T_1}\})}/{\lambda n}$, and $\mu_n^{(r)}(x_0)=0$ if $x_0\neq x_1$. In particular, we claim that, for any $r\in(0,\infty)$,
\begin{equation}\label{claim1}
\Delta_c^*\left(\mathcal{X}_{n}^{(r)},\mathcal{Y}_{n}^{(r)}\right)\rightarrow0
\end{equation}
in $\mathbf{P}$-probability as $n\rightarrow\infty$.

To prove \eqref{claim1}, for each $n\geq 1$, we consider the embedding $\pi_n$ of $V_n^{(r)}$ into $B_\mathcal{G}(0,rn\tilde{\psi}(n))$ given by $x_i\mapsto S_{T_i}$ for $i\geq1$, and $x_0\mapsto 0$. Moreover, we define a correspondence $\mathcal{C}_n$ by pairing $x_0$ with 0, $x_1$ with each other element of $\{S_0,\dots,S_{T_1}\}$, $x_i$ with each element of $\{S_{T_{i-1}+1},\dots,S_{T_i}\}$ for $i=2,\dots,R(n,r)$, and $x_{R(n,r)}$ with each remaining element of $B_\mathcal{G}(0,rn\tilde{\psi}(n))$. For this particular choice of embedding and correspondence, we will estimate each of the terms in the definition of $\Delta_c^*$ given at \eqref{deltacdef}, where $(Z,d_Z)$ is given by $B_\mathcal{G}(0,rn\tilde{\psi}(n))$ equipped with the rescaled resistance metric, i.e.\ $R_\mathcal{G}(\cdot,\cdot)/n\tilde{\psi}(n)$. (Note that $\pi_n$ is indeed an isometry from $(V_n^{(r)},d_E|_{V_n^{(r)}})$ into $(Z,d_Z)$.) Firstly, observe that $\mu_n^{(r)}\circ\pi_n^{-1}$ is the measure on $B_\mathcal{G}(0,rn\tilde{\psi}(n))$ obtained by assigning mass ${\mu_\mathcal{G}(\{S_{0},\dots,S_{T_1}\})}/{\lambda n}$ to $S_{T_1}$, and, for each $i=2,\dots,R(n,r)$, assigning mass $\mu_\mathcal{G}(\{S_{T_{i-1}+1},\dots,S_{T_i}\})/\lambda n$ to $S_{T_i}$ (and assigning mass 0 to all other elements of $B_\mathcal{G}(0,rn\tilde{\psi}(n))$). Hence, applying the definition of the Prohorov metric, it readily follows that
\begin{eqnarray*}
\lefteqn{d_Z^P\left(\mu_n^{(r)}\circ\pi_n^{-1},\frac{\mu_\mathcal{G}(\cdot\cap B_\mathcal{G}(0,rn\tilde{\psi}(n)))}{\lambda n}\right)}\\
&\leq& d_Z^P\left(\mu_n^{(r)}\circ\pi_n^{-1},\frac{\mu_\mathcal{G}(\cdot\cap B_\mathcal{G}(0,\{S_0,S_1,\dots,S_{T_{R(n,r)}}\}))}{\lambda n}\right)+\frac{\mu_\mathcal{G}(\{S_{T_{R(n,r)}+1},\dots,S_{T_{R(n,r)+1}}\})}{\lambda n}\\
&\leq &\max_{m\leq{T_1}}\frac{R_\mathcal{G}(S_m,S_{T_1})}{n\tilde{\psi}(n)}+\max_{i=2,\dots,R(n,r)}\max_{m\in \{T_{i-1}+1,\dots,{T_i}\}}\frac{R_\mathcal{G}(S_m,S_{T_i})}{n\tilde{\psi}(n)}\\
&&\hspace{60pt}+\frac{\mu_\mathcal{G}(\{S_{T_{R(n,r)}+1},\dots,S_{T_{R(n,r)+1}}\})}{\lambda n}.
\end{eqnarray*}
Since $\max_{m\leq{T_1}}R_\mathcal{G}(S_m,S_{T_1})\leq T_1$ is $\mathbf{P}$-a.s.\ a finite random variable, the first term converges to 0, $\mathbf{P}$-a.s.\ As for the second and third terms, for large $n$, the sum of these is bounded above by
\begin{equation}\label{tbound}
\max_{i=2,\dots,R(n,r)+1}\frac{2(T_{i}-T_{i-1})}{n\tilde{\psi}(n)}.
\end{equation}
Now, by Theorem \ref{main2}(b), it holds that, for any $T\in(0,\infty)$,
\[\left(n^{-1}\sup\left\{m:\:R_\mathcal{G}(0,S_m)\leq t n \tilde{\psi}(n)\right\}\right)_{t\in[0,T]}\rightarrow (t)_{t\in[0,T]}\]
in $\mathbf{P}$-probability. Combined with \eqref{no-proof}, it follows that
\begin{equation}\label{e0}
\frac{R(n,r)}{n(\log n)^{-1/2}}\rightarrow \frac{r}{\tau}
\end{equation}
in $\mathbf{P}$-probability. Therefore, again applying \eqref{no-proof}, with probability going to one as $n\rightarrow \infty$,
\begin{equation}\label{e1}
\max_{i=2,\dots,R(n,r)+1}\frac{2(T_{i}-T_{i-1})}{n\tilde{\psi}(n)}\leq \max_{i\in \mathcal{I}_{2nr}}\frac{2(T_{i}-T_{i-1})}{n\tilde{\psi}(n)},
\end{equation}
where the notation $\mathcal{I}_{2nr}$ is defined as in the statement of Lemma \ref{tgap}. Applying the latter result, we see that the right-hand side above converges to zero in $\mathbf{P}$-probability, and thus we have established that,  in $\mathbf{P}$-probability,
\[d_Z^P\left(\mu_n^{(r)}\circ\pi_n^{-1},\frac{\mu_\mathcal{G}(\cdot\cap B_\mathcal{G}(0,rn\tilde{\psi}(n)))}{\lambda n}\right)\rightarrow 0.\]
Secondly,
\begin{eqnarray}
\max_{(x,x')\in\mathcal{C}_n}\frac{R_\mathcal{G}\left(\pi_n(x),x'\right)}{n\tilde{\psi}(n)}&\leq &
\max_{m\leq{T_1}}\frac{R_\mathcal{G}(S_m,S_{T_1})}{n\tilde{\psi}(n)}+\max_{i=2,\dots,R(n,r)}\max_{m\in \{T_{i-1}+1,\dots,{T_i}\}}\frac{R_\mathcal{G}(S_m,S_{T_i})}{n\tilde{\psi}(n)}\nonumber\\
&&\hspace{60pt}+\max_{m\in \{T_{R(n,r)}+1,\dots,{T_{R(n,r)+1}}\}}\frac{R_\mathcal{G}(S_m,S_{T_{R(n,r)}})}{n\tilde{\psi}(n)}.\label{e2}
\end{eqnarray}
Again, the first term converges to zero $\mathbf{P}$-a.s. Moreover, bounding the sum of the second and third terms by the expression at \eqref{tbound} and arguing as at \eqref{e1} shows that they converge to zero in $\mathbf{P}$-probability. Thirdly,
\begin{eqnarray*}
\max_{(x,x')\in\mathcal{C}_n} \left|x-\frac{d_\mathcal{G}(0, x')}{n\phi(n)}\right|&\leq &\max_{i=1,\dots,R(n,r)}\left|\frac{R_\mathcal{G}\left(0,S_{T_i}\right)}{n\tilde{\psi}(n)}-\frac{d_\mathcal{G}\left(0,S_{T_i}\right)}{n{\phi}(n)}\right|\\
&&+\max_{m\leq{T_1}}\frac{d_\mathcal{G}(S_m,S_{T_1})}{n{\phi}(n)}+\max_{i=2,\dots,R(n,r)}\max_{m\in \{T_{i-1}+1,\dots,{T_i}\}}\frac{d_\mathcal{G}(S_m,S_{T_i})}{n{\phi}(n)}\\
&&\hspace{60pt}+\max_{m\in \{T_{R(n,r)}+1,\dots,{T_{R(n,r)+1}}\}}\frac{d_\mathcal{G}(S_m,S_{T_{R(n,r)}})}{n{\phi}(n)}.
\end{eqnarray*}
The second, third and fourth terms can be dealt with exactly as were the terms on the right-hand side of \eqref{e2}. As for the first term, by Theorem \ref{main2}(d) and \eqref{e0}, we have that, with probability going to one as $n\rightarrow \infty$,
\[\max_{i=1,\dots,R(n,r)}\left|\frac{R_\mathcal{G}\left(0,S_{T_i}\right)}{n\tilde{\psi}(n)}-\frac{d_\mathcal{G}\left(0,S_{T_i}\right)}{n{\phi}(n)}\right|\leq
\max_{m=1,\dots,2nr}\left|\frac{R_\mathcal{G}\left(0,S_{m}\right)}{n\tilde{\psi}(n)}-\frac{d_\mathcal{G}\left(0,S_{m}\right)}{n{\phi}(n)}\right|.\]
Hence, from Theorem \ref{main2}(b),(c), this also converges to zero in $\mathbf{P}$-probability. This completes the proof of \eqref{claim1}.

As a consequence of \eqref{claim1}, to establish part (a) of the proposition, it will suffice to check that
\begin{equation}\label{claim2}
\mathcal{Y}_{n}^{(r)}\rightarrow \mathcal{X}^{(r)}
\end{equation}
in $\mathbf{P}$-probability in the space $(\mathbb{K}^*_c,\Delta^*_c)$, where $(M,d_M)$ is $\mathbb{R}$ equipped with the Euclidean distance. For this, we start by noting that $(V_n^{(r)},d_E|_{V_n^{(r)}})$ is isometrically embedded into $([0,r],d_E^{(r)})$ by the identity map. Moreover, similarly to above, it is possible to define a correspondence $\mathcal{C}$ between $V_n^{(r)}$ and $[0,r]$ by pairing $x_0$ with 0, $x_i$ with each element of $(x_{i-1},x_i]$ for $i=1,\dots,R(n,r)$, and $x_{R(n,r)}$ with each remaining element of $[0,r]$. Now, if $a\in(0,r]$ is such that $R_\mathcal{G}(0,S_{T_1})\leq an\tilde{\psi}(n)$, then
\[\mu_n^{(r)}\left([0,a]\right)=\frac{\mu_\mathcal{G}\left(S_{[0,T_{R(n,a)}]}\right)}{\lambda n}.\]
Hence, by applying Theorem \ref{main2}(a),(d) and \eqref{e0}, we find that, for each fixed $a\in(0,r]$, $\mu_n^{(r)}\left([0,a]\right)\rightarrow a$ in $\mathbf{P}$-probability as $n\to\infty$. It readily follows that, writing $d^P$ for the Prohorov distance on $\mathbb{R}$,
\[d^P\left(\mu_n^{(r)},\mathcal{L}^{(r)}\right)\to 0\]
in $\mathbf{P}$-probability as $n\to\infty$. Moreover,
\begin{equation}\label{xnr}
\sup_{(x,x')\in\mathcal{C}}\left(d_E(x,x'))\right)\leq x_1+\sup_{i=2,\dots,R(n,r)}\left(x_{i}-x_{i-1}\right)+r-x_{R(n,r)}.
\end{equation}
Now, $x_1=R_\mathcal{G}(0,S_{T_1})/n\tilde{\psi}(n)\rightarrow 0$, $\mathbf{P}$-a.s., and the sum of the remaining terms can be bounded above by the expression at \eqref{tbound}, and so converges to zero in $\mathbf{P}$-probability. In conclusion, since the maps we consider for both the approximating and limiting spaces are the identity map, we have proved that
\[\Delta_c^*\left(\mathcal{Y}_{n}^{(r)},\mathcal{X}^{(r)}\right)\rightarrow0\]
in $\mathbf{P}$-probability as $n\rightarrow\infty$, which confirms \eqref{claim2}.

The proof of part (b) is similar, though we need to deal with the embedding into $\mathbb{R}^d$. To do this, we will first apply Skorohod coupling to give us a sequence $(S^n)_{n\geq 1}$ of copies of $S$ built on the same probability space as $W$ such that, $\mathbf{P}$-a.s.
\begin{equation}\label{sconv}
\left(n^{-1/2}S^n_{nt}\right)_{t\geq 0}\rightarrow\left(W_t\right)_{t\geq 0},
\end{equation}
uniformly on compact intervals of $t$. We then suppose $\tilde{\mathcal{X}}_n^{(r)}$ is built from $S^n$, and further define
\[\tilde{\mathcal{Y}}_{n}^{(r)}:=\left(V_n^{(r)},d_E|_{V_n^{(r)}},\mu_n^{(r)},0,f_n^{(r)}\right),\]
where the first four components are defined as for $\mathcal{Y}_n^{(r)}$ above (but now from $S^n$), and $f_n^{(r)}$ is the map from $V_n^{(r)}$ into $\mathbb{R}^4$ given by $f_n^{(r)}(x_i)=n^{-1/2}S^n_{T_i}$ for $i=1,\dots,R(n,r)$, and $f_n^{(r)}(x_0)=0$. To show that
\begin{equation}\label{claim3}
\Delta_c^*\left(\tilde{\mathcal{X}}_{n}^{(r)},\tilde{\mathcal{Y}}_{n}^{(r)}\right)\rightarrow0
\end{equation}
in $\mathbf{P}$-probability as $n\rightarrow\infty$, we proceed as in the first part of the proof. We bound the only term not already dealt with as follows:
\begin{eqnarray}
\max_{(x,x')\in\mathcal{C}_n}\left|f_n^{(r)}(x)-\frac{x'}{n^{1/2}}\right|&\leq &
\max_{m\leq{T_1}}\frac{|S_m^n-S_{T_1}^n|}{n^{1/2}}+\max_{i=2,\dots,R(n,r)}\max_{m\in \{T_{i-1}+1,\dots,{T_i}\}}\frac{|S_m^n-S_{T_i}^n|}{n^{1/2}}\nonumber\\
&&\hspace{60pt}+\max_{m\in \{T_{R(n,r)}+1,\dots,T_{R(n,r)+1}\}}\frac{|S_m^n-S_{T_{R(n,r)}}^n|}{n^{1/2}}\nonumber\\
&\leq& 3\max_{\substack{m,k\leq T_{R(n,r)+1}:\\ |m-k|\leq \delta(n,r)}}\frac{|S_m^n-S_k^n|}{n^{1/2}},\label{e22}
\end{eqnarray}
where
\[\delta(n,r):=T_1+\max_{i=2,\dots,R(n,r)+1}(T_i-T_{i-1}).\]
Now, from \eqref{no-proof} and \eqref{e0}, we have that, with $\mathbf{P}$-probability converging to one as $n\to\infty$, $R(n,r)+1\leq N_{2nr}$. It therefore follows from Lemma \ref{tgap} that $n^{-1}\delta(n,r)\rightarrow 0$ in $\mathbf{P}$-probability as $n\rightarrow \infty$. Moreover, from Theorem \ref{main2}(d) and \eqref{e0}, we have that $n^{-1}(T_{R(n,r)+1})\rightarrow r$ in $\mathbf{P}$-probability. Combining these observations with \eqref{sconv} shows that the bound at \eqref{e22} converges to zero in $\mathbf{P}$-probability, and hence we arrive at \eqref{claim3}. Thus it remains to show that
\begin{equation}\label{claim4}
\tilde{\mathcal{Y}}_{n}^{(r)}\rightarrow \tilde{\mathcal{X}}^{(r)}
\end{equation}
in $\mathbf{P}$-probability in the space $(\mathbb{K}^*_c,\Delta^*_c)$, where $(M,d_M)$ is $\mathbb{R}^4$ equipped with the Euclidean distance. The remaining term to deal with is now
\begin{eqnarray*}
\sup_{(x,x')\in\mathcal{C}}\left|f_n^{(r)}(x)-W_{x'}\right|&\leq&
\max_{i=1,\dots,R(n,r)}\sup_{x'\in [x_{i-1},x_i]}\left|W_{x'}-W_{x_i}\right|+\sup_{x'\in[x_{R(n,r)},r]}\left|W_{x'}-W_{x_{R(n,r)}}\right|\\
&&\hspace{60pt}+\max_{i=1,\dots,R(n,r)}\left|W_{x_i}-\frac{S^n_{T_i}}{n^{1/2}}\right|.
\end{eqnarray*}
By \eqref{xnr} and the continuity of $W$, the first two terms converge to zero in $\mathbf{P}$-probability. For the third term, we have
\begin{equation}\label{ttt}
\max_{i=1,\dots,R(n,r)}\left|W_{x_i}-\frac{S^n_{T_i}}{n^{1/2}}\right|\leq\max_{i=1,\dots,R(n,r)}\left|W_{x_i}-\frac{S^n_{nx_i}}{n^{1/2}}\right|+\max_{i=1,\dots,R(n,r)}\left|\frac{S_{nx_i}^n-S^n_{T_i}}{n^{1/2}}\right|.
\end{equation}
Since $x_{R(n,r)}$ converges to $r$ in $\mathbf{P}$-probability, the first term above converges to zero in $\mathbf{P}$-probability by \eqref{sconv}. As for the second term above, we first note that
\[n^{-1}\max_{i=1,\dots,R(n,r)}\left|nx_i-T_i\right|\leq \sup_{t\leq n^{-1}T_{R(n,r)}}\left|\frac{R_\mathcal{G}(0,S_{nt})}{n\tilde{\psi}(n)}-t\right|.\]
Again applying the fact that $n^{-1}T_{R(n,r)}\rightarrow r$ in $\mathbf{P}$-probability, Theorem \ref{main2}(b) gives that the above upper bound converges to zero in $\mathbf{P}$-probability. In combination with \eqref{sconv}, this establishes that the second term at \eqref{ttt} also converges to zero in $\mathbf{P}$-probability. Thus we have completed the proof of \eqref{claim4}.
\end{proof}

The next result gives that the resistance across balls diverges at the same rate as the point-to-point resistance.

\begin{prop}\label{p2} It holds that
\[\lim_{r\rightarrow\infty}\liminf_{n\rightarrow\infty}\mathbf{P}\left(R_\mathcal{G}\left(0,B_\mathcal{G}(0,rn\tilde{\psi}(n))^c\right)\geq \Lambda n\tilde{\psi}(n)\right)=1,\qquad \forall \Lambda\geq 0.\]
\end{prop}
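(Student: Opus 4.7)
The plan is to exploit the cut-vertex structure that cut times provide. The first step is the observation that whenever $T_i\in\mathcal{T}$, the vertex $S_{T_i}$ is a cut vertex of $\mathcal{G}$: since every edge of $\mathcal{G}$ has the form $\{S_k,S_{k+1}\}$ and the cut-time condition $S_{[0,T_i]}\cap S_{[T_i+1,\infty)}=\emptyset$ forces each such edge to lie entirely within either $\{S_0,\ldots,S_{T_i}\}$ or $\{S_{T_i},S_{T_i+1},\ldots\}$, removing $S_{T_i}$ disconnects the former from the latter. By the series decomposition for effective resistance across a cut vertex, it follows that
\[R_{\mathcal{G}}(0,A)\geq R_{\mathcal{G}}(0,S_{T_i})\]
for every $A\subseteq V(\mathcal{G})$ that is disjoint from $S_{[0,T_i]}$.

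Given this reduction of a point-to-set resistance lower bound to a point-to-point one, I would prove the proposition by locating, with high $\mathbf{P}$-probability, a cut time $T^{*}$ such that $\max_{m\leq T^{*}}R_{\mathcal{G}}(0,S_m)<rn\tilde{\psi}(n)$ and $R_{\mathcal{G}}(0,S_{T^{*}})\geq\Lambda n\tilde{\psi}(n)$. The first condition ensures $S_{[0,T^{*}]}\subseteq B_{\mathcal{G}}(0,rn\tilde{\psi}(n))$, so that $A_{n}:=B_{\mathcal{G}}(0,rn\tilde\psi(n))^{c}$ is disjoint from $S_{[0,T^{*}]}$, and then the cut-vertex bound gives $R_{\mathcal{G}}(0,A_{n})\geq R_{\mathcal{G}}(0,S_{T^{*}})\geq\Lambda n\tilde{\psi}(n)$, as required.

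To find such a $T^{*}$, for fixed $\Lambda\geq 0$ I would take any $r>\Lambda$ (which is all that is needed for the stated $\lim_{r\to\infty}$ claim), pick some $t_{0}\in(\Lambda,r)$, and let $T^{*}$ be the largest cut time not exceeding $nt_{0}$. The bound on $\max_{m\leq T^{*}}R_{\mathcal{G}}(0,S_m)$ is then immediate from the running-max convergence $\max_{m\leq nt_{0}}R_{\mathcal{G}}(0,S_m)/(n\tilde\psi(n))\to t_{0}$ already derived in the proof of Theorem~\ref{main2}(b), since $t_0<r$. For the lower bound on $R_{\mathcal{G}}(0,S_{T^{*}})$, Lemma~\ref{tgap} gives $nt_{0}-T^{*}=o(n)$ in $\mathbf{P}$-probability, so $T^{*}/n\to t_{0}$; combining with the (uniform) functional convergence in Theorem~\ref{main2}(b) then yields $R_{\mathcal{G}}(0,S_{T^{*}})/(n\tilde\psi(n))\to t_{0}>\Lambda$ in $\mathbf{P}$-probability.

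The only conceptually new ingredient is the cut-vertex/series-law observation; once it reduces the problem to a point-to-point resistance estimate, the rest is bookkeeping with the convergence statements already in hand. The one step needing a little care is the evaluation of the functional convergence at the random time $T^{*}$, but since $T^{*}/n$ tends to a deterministic constant and the limit $s\mapsto s$ is continuous, this causes no genuine difficulty.
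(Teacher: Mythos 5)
Your proof is correct and follows essentially the same route as the paper's: both exploit the cut-vertex/series-law structure at a cut time lying deep inside the ball to reduce the point-to-set resistance to a point-to-point one, and then invoke the scaling limits of Theorem \ref{main2}(b),(d) (and Lemma \ref{tgap}). The paper phrases its key inequality $R_\mathcal{G}(0,B_\mathcal{G}(0,rn\tilde\psi(n))^c)\geq n\tilde\psi(n)\,x_{R(n,r)}$ as a ``clearly'' step using the last cut point $S_{T_{R(n,r)}}$ inside the resistance ball; you instead fix $t_0\in(\Lambda,r)$ and take the last cut time before $nt_0$, and you are explicit about the one point the paper elides: that with high probability the whole initial segment $S_{[0,T^*]}$ lies inside the ball (via the running-max control of Lemma \ref{request}), which is exactly what legitimizes applying the series decomposition to the set $B_\mathcal{G}(0,rn\tilde\psi(n))^c$. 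That extra care is a small improvement in rigour, but the idea and the inputs are the same as in the paper.
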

\begin{proof} Using the notation from the proof of Proposition \ref{p1}, we clearly have that
\[R_\mathcal{G}\left(0,B_\mathcal{G}(0,rn\tilde{\psi}(n))^c\right)\geq n\tilde{\psi}(n)x_{R(n,r)}.\]
Hence, since $x_{R(n,r)}\rightarrow r$ in $\mathbf{P}$-probability as $n\rightarrow \infty$ (as follows from the convergence to zero of the expression at \eqref{xnr}), it holds that
\[\liminf_{n\rightarrow\infty}\mathbf{P}\left(R_\mathcal{G}\left(0,B_\mathcal{G}(0,rn\tilde{\psi}(n))^c\right)\geq \Lambda n\tilde{\psi}(n)\right)\geq \mathbf{P}\left(r\geq \Lambda+1\right).\]
Taking $r\geq \Lambda+1$ results in the right-hand side being equal to one, and so we obtain the desired result.
\end{proof}

We are nearly ready to complete the proof of Theorem \ref{main1}. It remains to introduce the resistance form framework in which \cite[Theorem 7.2]{Croyres} is stated. (We highlight that the idea of a resistance form was originally pioneered by Kigami in the context of analysis on self-similar fractals, see \cite{KAOF}.) To this end, we let $\mathbb{F}^*$ be quintuplets of the form $(F,R,\mu,\rho,f)$, where:
\begin{itemize}
  \item $F$ is a non-empty set;
  \item $R$ is a resistance metric on $F$ (i.e.\ for each finite $V\subseteq F$, there exist conductances between the elements of $V$ for which $R|_{V\times V}$ is the associated effective resistance metric, see \cite[Definition 2.3.2]{KAOF}), and closed bounded sets in $(F,R)$ are compact;
  \item $\mu$ is a locally finite Borel regular measure of full support on $(F,R)$;
  \item $\rho$ is a distinguished point in $F$;
  \item $f$ is a continuous map from $(F,R)$ into some image space $(M, d_M)$, which is a fixed complete, separable metric space.
\end{itemize}
We highlight that the resistance metric is associated with a certain quadratic form known as a resistance form $(\mathcal{E},\mathcal{F})$, as characterised by
\[R(x,y)^{-1}=\inf\left\{\mathcal{E}(u,u):\:u\in\mathcal{F},\:u(x)=0,\:u(y)=1\right\},\qquad \forall x,y\in F,\:x\neq y,\]
(see \cite[Section 2.3]{KAOF},) and we will further assume that for elements of $\mathbb{F}$ this form is regular in the sense of \cite[Definition 6.2]{Kq}. In particular, this ensures the existence of a related regular Dirichlet form $(\mathcal{E},\mathcal{D})$ on $L^2(F,\mu)$ (see \cite[Theorem 9.4]{Kq}), which we suppose is recurrent. The following result shows that the spaces introduced above fall into this class.

\begin{lem}\label{flem} $\mathbf{P}$-a.s., $\mathcal{X}_n$, $\tilde{\mathcal{X}}_n$, $\mathcal{X}$ and $\tilde{\mathcal{X}}$ are all elements of $\mathbb{F}^*$.
\end{lem}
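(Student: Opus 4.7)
The plan is to unpack the definition of $\mathbb{F}^*$ into its structural demands (non-emptiness, a resistance metric with compact closed bounded sets, a locally finite Borel regular measure of full support, a marked point, a continuous map into $(M,d_M)$, regularity of the associated resistance form, and recurrence of the resulting Dirichlet form) and verify each condition for the four spaces in turn. Since all these properties are preserved under multiplying the metric and the measure by positive constants, the deterministic rescalings $1/(n\tilde{\psi}(n))$, $1/(\lambda n)$, $1/(n\phi(n))$ and $1/n^{1/2}$ play no role in the verification.

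For the deterministic limit spaces $\mathcal{X}$ and $\tilde{\mathcal{X}}$, the Euclidean distance on $[0,\infty)$ is the classical one-dimensional resistance metric (coming from the form $\mathcal{E}(u,u) = \tfrac{1}{2}\int_0^\infty (u'(x))^2\,dx$ whose associated Hunt process is reflecting Brownian motion), which is regular and recurrent; Lebesgue measure is locally finite, Borel regular, and of full support; and closed bounded sets are compact by Heine--Borel. The identity map is trivially continuous, and $W_{\cdot}$ is $\mathbf{P}$-a.s.\ continuous into $\mathbb{R}^4$. For the discrete spaces $\mathcal{X}_n$ and $\tilde{\mathcal{X}}_n$, the function $R_\mathcal{G}$ is by construction a resistance metric on $V(\mathcal{G})$, arising from placing unit conductances on the edges of the locally finite graph $\mathcal{G}$. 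A Nash--Williams cut bound at a single vertex gives $R_\mathcal{G}(x,y)\geq 1/\mathrm{deg}_\mathcal{G}(x)\geq 1/8$ for all $x\neq y$, so the $R_\mathcal{G}$-topology on $V(\mathcal{G})$ is discrete; consequently every map from $V(\mathcal{G})$ into a metric space is continuous, which handles both $d_\mathcal{G}(0,\cdot)$ and the identity embedding into $\mathbb{R}^4$. The measure $\mu_\mathcal{G}$ has full support because every vertex has positive degree, and assigns finite mass to any finite set since degrees are bounded. Regularity of the associated resistance form on a locally finite graph is standard, since finitely supported (equivalently, compactly supported in the discrete topology) functions lie in the form domain, are dense in $C_0(V(\mathcal{G}))$ in the uniform norm, and are dense in the domain in the form norm.

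The only steps requiring probabilistic input are the $\mathbf{P}$-a.s.\ compactness of closed bounded $R_\mathcal{G}$-balls (equivalently, their finiteness, which also yields local finiteness of $\mu_\mathcal{G}$) and the recurrence of the associated Markov chain; I expect this to be the main point, though it is ultimately routine. Both follow from the cut-point series decomposition: writing $0\leq T_1<T_2<\dots$ for the $\mathbf{P}$-a.s.\ infinite set of cut times, additivity of resistance across cut points combined with the bound $R_\mathcal{G}(0,S_{T_m})\geq m-1$ noted in the excerpt shows that any vertex $x$ with $R_\mathcal{G}(0,x)\leq r$ must lie in $V(\mathcal{G}_{T_k})$ for the first $k$ with $k-1>r$, and this set is $\mathbf{P}$-a.s.\ finite. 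The same series bound gives $R_\mathcal{G}(0,B_\mathcal{G}(0,r)^c)\to\infty$ as $r\to\infty$, which is equivalent to recurrence of the simple random walk on $\mathcal{G}$, and hence of the associated Dirichlet form. Collecting these observations for each of the four spaces completes the verification.
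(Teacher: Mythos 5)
Your proposal is correct and follows essentially the same approach as the paper: identify the resistance forms explicitly, deduce compactness of bounded $R_\mathcal{G}$-balls and local finiteness of $\mu_\mathcal{G}$ from the cut-point bound $R_\mathcal{G}(0,S_{T_m})\geq m-1$, and check regularity and recurrence via $R_\mathcal{G}(0,B_\mathcal{G}(0,r)^c)\to\infty$. One small point: you obtain that divergence $\mathbf{P}$-a.s.\ directly from the cut-point decomposition, whereas the paper appeals to Proposition \ref{p2} (a convergence-in-probability statement that needs an extra monotonicity step to yield the a.s.\ conclusion), and also note the harmless normalisation slip that with $\mathcal{E}(u,u)=\tfrac12\int_0^\infty(u'(x))^2\,dx$ the associated resistance metric on $[0,\infty)$ is $2d_E$ rather than $d_E$.
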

\begin{proof} By construction, $R_\mathcal{G}$ is the resistance metric associated with the resistance form
\[\mathcal{E}_\mathcal{G}(u,v):=\frac{1}{2}\sum_{\substack{x,y\in V(\mathcal{G}):\\\{x,y\}\in E(\mathcal{G})}}(u(x)-u(y))(v(x)-v(y)),\qquad \forall u,v\in\mathcal{F}_\mathcal{G},\]
where $\mathcal{F}_\mathcal{G}$ is the collection of functions on $V(\mathcal{G})$ such that $\mathcal{E}_\mathcal{G}(u,u)<\infty$, see \cite[Example 3.5]{Kq}. Moreover, as was commented above Proposition \ref{p1}, $R_\mathcal{G}$ balls are finite, and so compact. Not only does this confirm $(F,R)$ and $\mu$ satisfy the properties in the first three bullet points above, but it also implies that $\mathcal{F}_\mathcal{G}$ contains all compactly supported functions. The latter observation gives that $(\mathcal{E}_\mathcal{G},\mathcal{F}_\mathcal{G})$ is regular. As for the recurrence of the associated Dirichlet form $(\mathcal{E}_\mathcal{G},\mathcal{D}_\mathcal{G})$ on $L^2(V(\mathcal{G},\mu_{\mathcal{G}})$, this is equivalent to
\begin{equation}\label{rcond}
\lim_{r\rightarrow\infty}R_\mathcal{G}\left(0,B_\mathcal{G}(0,r)^c\right)=\infty,
\end{equation}
see \cite[Lemma 2.3]{Croyres}, for instance. This divergence is given by Proposition \ref{p2}. Hence we obtain that $\mathcal{X}_1$ and $\tilde{\mathcal{X}}_1$ are in $\mathbb{F}^*$, $\mathbf{P}$-a.s. The rescaled spaces $\mathcal{X}_n$ and $\tilde{\mathcal{X}}_n$ can be dealt with similarly.

By \cite[Proposition 16.4]{Kq}, $(\mathbb{R},d_E)$ is a resistance metric space with associated regular resistance form
\[\mathcal{E}^{(2)}(u,v):=\int_{-\infty}^\infty u'(x)v'(x)dx,\qquad \forall u,v\in\mathcal{F}^{(2)},\]
where $\mathcal{F}^{(2)}$ is the collection of $u$ in $C(\mathbb{R})$ such that there exists an $\mathcal{E}^{(2)}$-Cauchy sequence of functions in $\mathcal{F}_0^{(2)}:=\{v:\:v\in C^1(\mathbb{R}),\:\int_{-\infty}^\infty v'(x)^2dx<\infty\}$ that converge uniformly on compacts to $u$. By taking the trace onto the one-sided space $[0,\infty)$ as per \cite[Theorem 8.4]{Kq}, we find that $([0,\infty),d_E)$ is also a resistance metric space associated with a regular resistance form $(\tilde{\mathcal{E}}^{(2)},\tilde{\mathcal{F}}^{(2)})$. To check the recurrence of the associated Dirichlet form $(\tilde{\mathcal{E}}^{(2)},\tilde{\mathcal{D}}^{(2)})$ on $L^2([0,\infty),\mathcal{L})$, we again appeal to \eqref{rcond}. In this case, we have that the resistance from $0$ to the boundary of the Euclidean ball of radius $r$ centred at 0 is simply given by $r$, and hence \eqref{rcond} holds. Since the remaining conditions are straightforward to check, we have thus established that $\mathcal{X}$ and $\tilde{\mathcal{X}}$ lie in $\mathbb{F}^*$, $\mathbf{P}$-a.s.
\end{proof}

Now, since the Dirichlet forms elements associated with elements of $\mathbb{F}^*$ are regular, they are naturally associated with Hunt processes. In particular, applying the spatial embeddings, the random processes corresponding to $\mathcal{X}_n$ and $\tilde{\mathcal{X}}_n$ are given by
\[\left(\frac{1}{n\phi(n)}d_\mathcal{G}\left(0,X^{cont}_{\lfloor tn^2\psi(n)\rfloor}\right)\right)_{t\geq 0},\qquad\left(\frac{1}{n^{1/2}}X^{cont}_{\lfloor tn^2\psi(n)\rfloor}\right)_{t\geq 0},\]
respectively, where $(X^{cont}_t)_{t\geq0}$ is the continuous-time random walk on $V(\mathcal{G})$ with jump chain equal to $(X_n)_{n\geq 0}$ and mean one exponential holding times (see \cite[Remark 5.7]{Barlow}, for example). Moreover, the random processes corresponding to $\mathcal{X}$ and $\tilde{\mathcal{X}}$ are given by
\[\left(|B_t|\right)_{t\geq 0},\qquad\left(W_{|B_t|}\right)_{t\geq 0},\]
respectively, where we use the fact that $|B|$ has the same law as reflected Brownian motion on $[0,\infty)$ (see \cite[Example 8.1]{AEW}, as well as \cite[Remarks 1.6 and 3.1]{AEW} for the connection between the framework of that paper and that of Kigami). With these preparations in place we are now ready to proceed with the proof of the main result.

\begin{proof}[Proof of Theorem \ref{main1}]
Given Propositions \ref{p1} and \ref{p2}, Lemma \ref{flem}, and the descriptions of the processes associated to the resistance spaces that precede this proof, applying \cite[Theorem 7.2]{Croyres} immediately yields that the conclusions of Theorem \ref{main1} hold when $X$ is replaced by its continuous-time counterpart $X^{cont}$. Since $X^{cont}$ has jump rate one from every vertex and the limiting processes are continuous, it is then straightforward to obtain the same results for $X$.
\end{proof}

We conclude the article by checking the further random walk properties that are stated as Corollary \ref{srwcor}.

\begin{proof}[Proof of Corollary \ref{srwcor}]
Defining $R(n,1)$ as at \eqref{rnrdef}, we have that $S_{[0,T_{R(n,1)}]}\subseteq B_\mathcal{G}(0,n\tilde{\psi}(n))\subseteq S_{[0,T_{R(n,1)+1}]}$. By Theorem \ref{main2}(d) and \eqref{e0}, it holds that $n^{-1}T_{R(n,1)}$ and $n^{-1}T_{R(n,1)+1}$ both converge to one in $\mathbf{P}$-probability. Together with Theorem \ref{main2}(a), we thus obtain that
\begin{equation}\label{muglim}
\frac{\mu_\mathcal{G}\left(B_\mathcal{G}\left(0,n\tilde{\psi}(n)\right)\right)}{n}\to \lambda
\end{equation}
in $\mathbf{P}$-probability. A reparameterisation applying \cite[Remark 2.1.3]{S} (which implies that $\tilde{\psi}(n)\sim\tilde{\psi}(n\tilde{\psi}(n))$) then gives that
\[\frac{\mu_\mathcal{G}\left(B_\mathcal{G}\left(0,n\right)\right)}{n\tilde{\psi}(n)^{-1}}\to \lambda\]
in $\mathbf{P}$-probability. Combined with Proposition \ref{p2}, this establishes that \cite[Assumption 1.2(1)]{KM} holds in our setting with $d=R_\mathcal{G}$, $v(x)=x\tilde{\psi}(x)^{-1}$ and $r(x)=x$. Therefore we can apply \cite[Proposition 1.3]{KM} to deduce that
\[\lim_{\Lambda\rightarrow\infty}\inf_{r\geq 1}\mathbf{P}\left(\Lambda^{-1}\leq\frac{\mathbf{E}_0^\mathcal{G}\left(\tilde{\tau}^\mathcal{G}_r\right)}{r^2\tilde{\psi}(r)^{-1}}\leq\Lambda\right)=1,\]
where $\tilde{\tau}_r^\mathcal{G}:=\inf\{n\geq 0:\:R_\mathcal{G}(0,X_n)\geq r\}$. Now, by Theorem \ref{main2}(b),(c), a $d_\mathcal{G}$-ball of radius $n\phi(n)$ is comparable with an $R_\mathcal{G}$-ball of radius $n\tilde{\psi}(n)$. Or, after reparameterisation, a $d_\mathcal{G}$-ball of radius $r$ is comparable with an $R_\mathcal{G}$-ball of radius $r\tilde{\psi}(r)\phi(r)^{-1}$. Hence we obtain part (a) of the result.

Towards proving part (b), we start by considering the continuity of the heat kernel. In particular, applying \cite[Proposition 12]{LLT}, we have that
\begin{equation}\label{b1}
\sup_{t\in I}\left|\lambda np_{tn^2\psi(n)}^\mathcal{G}\left(0,S_{ nx}\right)-\lambda np_{ tn^2\psi(n)}^\mathcal{G}\left(0,S_{ ny}\right)\right|^2\leq \frac{C R_\mathcal{G}\left(S_{ nx},S_{ny}\right)h_\mathcal{G}^{-1}\left(Cn^2\psi(n)\right)}{n^2\psi(n)^2},\:\forall x,y\geq 0,
\end{equation}
where $h_\mathcal{G}^{-1}(t):=\sup\{r:\:r\mu_\mathcal{G}(B_\mathcal{G}(0,r))\leq t\}$ and $C$ is a deterministic constant that only depends on $I$. Now, from \eqref{muglim}, we have that
\begin{equation}\label{b2}
\frac{h_\mathcal{G}^{-1}\left(Cn^2\psi(n)\right)}{n\psi(n)}\rightarrow c
\end{equation}
in $\mathbf{P}$-probability, for some deterministic constant $c$ (depending only on $C$). Putting together \eqref{b1}, \eqref{b2}, and Theorem \ref{main2}(b), it follows that: for any $x_0\geq 0$ and $\varepsilon\in(0,1)$,
\begin{equation}\label{b3}
\sup_{t\in I}\sup_{\substack{x,y\in[0,x_0+1]:\\|x-y|\leq \varepsilon}}\left|\lambda np_{tn^2\psi(n)}^\mathcal{G}\left(0,S_{ nx}\right)-\lambda np_{ tn^2\psi(n)}^\mathcal{G}\left(0,S_{ ny}\right)\right|^2\leq C\varepsilon,
\end{equation}
with $\mathbf{P}$-probability converging to one as $n\to\infty$, where $C$ is a deterministic constant that depends on $I$, but not $\varepsilon$. The remainder of the proof is similar to that of \cite[Theorem 1]{LLT}. In particular, we decompose the expression we are trying to bound as follows: for any $\delta>0$,
\[\left|\lambda np_{ tn^2\psi(n)}^\mathcal{G}\left(0,S_{ nx}\right)-p^{|B|}_t(x)\right|\leq \left|\lambda np_{ tn^2\psi(n)}^\mathcal{G}\left(0,S_{ nx}\right)-T_1\right|+\left|T_1-T_2\right|+\left|T_2-p^{|B|}_t(x)\right|,\]
where
\[p^{|B|}_t(x):=\sqrt{\frac{2}{\pi t}}e^{-\frac{x^2}{2t}}\]
is the transition density of the process $|B|$, and
\[T_1:=\frac{\lambda n\left(\mathbf{P}_0^\mathcal{G}\left(d_E\left(x,\frac{d_\mathcal{{G}}\left(0,X_{tn^2\psi(n)}\right)}{n\phi(n)}\right)\leq \delta\right)+\mathbf{P}_0^\mathcal{G}\left(d_E\left(x,\frac{d_\mathcal{{G}}\left(0,X_{tn^2\psi(n)+1}\right)}{n\phi(n)}\right)\leq \delta\right)\right)}{2\mu_\mathcal{G}\left(\left\{y\in V(\mathcal{G}):\:d_E\left(x,\frac{d_\mathcal{G}(0,y)}{n\phi(n)}\right)\leq \delta\right\}\right)},\]
\[T_2:=\frac{\mathbf{P}\left(d_E\left(x,|B_t|\right)\leq \delta\right)}{\int_0^\infty \mathbf{1}_{\{d_E(x,y)\leq\delta\}}dy}.\]
By Theorem \ref{main2}(c), we know that
\[\left\{y\in V(\mathcal{G}):\:d_E\left(x,\frac{d_\mathcal{G}(0,y)}{n\phi(n)}\right)\leq \delta\right\}
\subseteq\left\{S_m:\:\max\{n(x-2\delta),0\}\leq m\leq n(x+2\delta)\right\}\]
for all $x\in[0,x_0]$ with $\mathbf{P}$-probability converging to one. Together with \eqref{b3}, if $\delta\leq \varepsilon/2$, this implies
\[\sup_{t\in I}\sup_{x\in[0,x_0]}\left|\lambda np_{ tn^2\psi(n)}^\mathcal{G}\left(0,S_{ nx}\right)-T_1\right|\leq C\varepsilon.\]
with $\mathbf{P}$-probability converging to one as $n\to\infty$. Similarly, the continuity of $p^{|B|}_t(x)$ allows us to deduce that, if $\delta$ is chosen sufficiently small, then $|T_2-p^{|B|}_t(x)|\leq C\varepsilon$. Finally, we note that Theorems \ref{main1}(a) and \ref{main2}(a),(c) imply that $T_1\rightarrow T_2$ in $\mathbf{P}$-probability. The result follows.
\end{proof}

\section*{Acknowledgements}

DC was supported by JSPS Grant-in-Aid for Scientific Research (A), 17H01093, JSPS Grant-in-Aid for Scientific Research (C), 19K03540, and the Research Institute for Mathematical Sciences, an International Joint Usage/Research Center located in Kyoto University. DS was supported by a JSPS Grant-in-Aid for Early-Career Scientists, 18K13425, JSPS  Grant-in-Aid for Scientific Research (B), 17H02849, and JSPS  Grant-in-Aid for Scientific Research (B), 18H01123.

\bibliography{4dRWRRW}
\bibliographystyle{amsplain}

\end{document}